\def\dj{d\kern-0.4em\char"16\kern-0.1em}
\def\Dj{\mbox{\raise0.3ex\hbox{-}\kern-0.4em D}}
\def\be{\begin{equation}}
\def\ee{\end{equation}}
\def\bena{\begin{eqnarray*}}
\def\ena{\end{eqnarray*}}
\def\t{\tau}
\def\s{\sigma}
\def\suml{\sum\limits}
\def\dss{\displaystyle}
\newcommand{\WF}{\operatorname{WF}}
 \def\D{\mathcal{D}}
 \def\E{\mathcal{E}}
 \def\Rd{\mathbf{R}^d}
 \def\Z{\mathbf{Z}_+}
\def\N{\mathbf{N}}
\def\lf {\lfloor}
\def\rf{\rfloor}
\newcommand{\sing}{\operatorname{singsupp}}
\newcommand{\supp}{\operatorname{supp}}
\numberwithin{equation}{section}
\newtheorem{te}{Theorem}[section]
\newtheorem{lema}{Lemma}[section]
\newtheorem{prop}{Proposition}[section]
\newtheorem{cor}{Corollary}[section]
\theoremstyle{definition}
\newtheorem{de}{Definition}[section]
\theoremstyle{remark}
\newtheorem{rem}{Remark}[section]
\begin{document}

%-------------------------------------------------------------------------
% editorial commands: to be inserted by the editorial office
%
%\firstpage{1} \volume{228} \Copyrightyear{2004} \DOI{003-0001}
%
%
%\seriesextra{Just an add-on}
%\seriesextraline{This is the Concrete Title of this Book\br H.E. R and S.T.C. W, Eds.}
%
% for journals:
%
%\firstpage{1}
%\issuenumber{1}
%\Volumeandyear{1 (2004)}
%\Copyrightyear{2004}
%\DOI{003-xxxx-y}
%\Signet
%\commby{inhouse}
%\submitted{March 14, 2003}
%\received{March 16, 2000}
%\revised{June 1, 2000}
%\accepted{July 22, 2000}
%
%
%
%---------------------------------------------------------------------------
%Insert here the title, affiliations and abstract:
%

\title{Ultradifferentiable functions of class $M_p^{\t,\s}$ and microlocal regularity}

\author{Nenad Teofanov}

\address{Department of Mathematics and Informatics,
Faculty of Sciences, University of Novi Sad, Novi Sad, Serbia}

\email{nenad.teofanov@dmi.uns.ac.rs}

\author{Filip Tomi\'c}

\address{Faculty of Technical Sciences,
University of Novi Sad, Novi Sad, Serbia}

\email{filip.tomic@uns.ac.rs}

\subjclass{Primary  46E10, 35A18; Secondary 46F05}

\keywords{Ultradifferentiable functions, Gevrey classes, distributions, wave-front sets, singular support}

%\date{November, 2015}

%%% ----------------------------------------------------------------------

\begin{abstract}
We study spaces of ultradifferentiable functions which contain Gevrey classes.
Although the corresponding defining sequences do not satisfy Komatsu's condition (M.2)',
we prove appropriate continuity properties under the action of (ultra)differentiable operators.
Furthermore, we study convenient localization procedure which leads to the
concept of wave-front set with respect to our regularity conditions. As an application,
we identify the standard projections of intersections/unions of wave-front sets
as singular supports of suitable
spaces of ultradifferentiable functions.
\end{abstract}

%%% ----------------------------------------------------------------------
\maketitle
%%% ----------------------------------------------------------------------

\emph{Dedicated to Professor Pilipovi\'c on the occasion of his
$65^{\text{th}}$ birthday.}

%\tableofcontents
%%%%%%%%%%%%%%%%%%%%%%
\section{Introduction}\label{sec-0}
%%%%%%%%%%%%%%%%%%%%%%

\par

Gevrey classes were introduced to describe regularity properties of
fundamental solution of the heat operator in \cite{Gevrey}, and thereafter
used in the study of different aspects of general theory
of linear partial differential operators such as hypoellipticity,
local solvability and propagation of singularities. We refer to
\cite{Rodino} for a detailed exposition of Gevrey
classes and their applications to the theory of linear partial
differential operators.
The intersection (projective limit) of Gevrey classes is strictly larger than the space of analytic functions
while its union (inductive limit) is strictly contained in the class of
smooth functions.
Therefore, it is of interest to study the intermediate spaces of smooth functions
which are contained in those gaps by introducing appropriate regularity conditions. On one hand, this may serve to
describe hypoellipticity properties between smooth/analytic hypoellipticity and
Gevrey hypoellipticity. On other hand,
it can be used in the study of corresponding microlocal regularity properties.

\par

In this paper we continue and complement our research initiated by Professor Stevan Pilipovi\'c  and recently published in
\cite{PTT-01, PTT-02},  and show further properties of classes of ultradifferentiable functions
which contain Gevrey classes. Recall, in \cite{PTT-01} we introduced sequences
$M_p^{\t,\s}=p^{\t p^{\s}}$, $p\in \N $, $\t>0$ and  $\s>1$, and
used them to define and study test function spaces for Roumieu type ultradistribution.
That approach is further developed in   \cite{PTT-02} where, together with a more detailed analysis of
ultradifferentiable functions of class $M_p^{\t,\s}$, we perform microlocal analysis with respect to
the regularity of such classes. More precisely, we proved there  that
$$\WF_{0,\infty}(P(D)u)\subseteq \WF_{0,\infty}(u)\subseteq \WF_{0,\infty}(P(D)u) \cup {\rm Char}(P),$$
where $u$  is a Schwartz distribution, $P(D)$ is a  partial differential operator with constant coefficients and
characteristic set ${\rm Char}(P)$,
and $\WF_{0,\infty}$ denotes the wave front set described in terms of new regularity conditions, see Section \ref{sec-3}.

\par

Different types of wave front sets are introduced in connection to the equation under investigation.
For example, the Gabor wave front set from \cite{HermanderRad} and \cite{RodinoWahlberg} is recently successfully
applied to different situations including the study of Schr\"odinger equations,
see \cite{CS, CW, CNR-1, CNR-2, P-SRW,  SW-1, SW-2, Wahlberg} and the references therein.
Such wave-front set can be characterized in terms of rapid decay of its
Gabor coefficients. That idea is introduced and exploited in \cite{JPTT, PTToft-01, PTToft-02}, and extended in \cite{CJT-1, CJT-2}
to more general Banach and Fr\'echet spaces.
The main tool used there are methods of time-frequency analysis and modulation spaces.
We refer to \cite{F1,FG1, FS1, FS2} for
details on modulation spaces and their role in time-frequency analysis, and remark that
a version of Gabor wave front set adapted to  regularity proposed in
this paper, will be the subject of our future investigation.

\par

This Section is ended by fixing the notation and recalling the standard definition of
ultradifferentiable functions and wave-front sets, and the reader familiar with the subject may
proceed to Section \ref{sec-1} which is devoted to the definition and basic properties of regularity classes
$\E_{\t,\s} (U)$. In particular, we study their embeddings with respect to parameters
$\t>0$ and  $\s>1$ (Proposition \ref{detectposition}), and show the stability under differentiation (Theorem \ref{tealgebra}), although its defining sequence
$M_p^{\t,\s}=p^{\t p^{\s}}$ does not satisfy (M.2)' (cf. Subsection \ref{definicije}). Furthermore, we study
the continuity of certain ultradifferentiable operators (Theorem \ref{TeoremKonstant}).

For the purpose of local analysis in Section \ref{sec-2}
we introduce particular admissibility condition for sequences of cut-off functions see Definition \ref{definicijaNiza},
and discuss regularity of Schwartz distributions in Propositions \ref{dovoljanUslov} and \ref{potrebanUslov}.

In Section \ref{sec-3} we first recall the definition of the wave-front set from   \cite{PTT-02}
and prove Lemma \ref{Singsuplema}, an important auxiliary result which is used
in the proof of  the pseudolocal property in Subsection \ref{pseloc}.
We conclude the paper by identifying the standard projections of intersections
and unions of wave-front sets with singular supports of appropriate projective/inductive limits of
test function spaces, Theorem \ref{projections}.

We note that  Propositions \ref{dovoljanUslov} and \ref{potrebanUslov} and  Lemma \ref{Singsuplema}
are stated in \cite{PTT-02} without proofs.

\subsection{Notation} \label{subsec01}
Sets of numbers are denoted in a usual way, e.g. ${\bf N}$ (resp. $\Z$) denotes the set of nonnegative ( resp. positive) integers.
For $x\in {\bf R}_+$ the floor
function is denoted by $\lf x \rf:=\max\{m\in
\N\,:\,m\leq x\}$. For a multi-index
$\alpha=(\alpha_1,\dots,\alpha_d)\in {\bf N}^d$ we write
$\partial^{\alpha}=\partial^{\alpha_1}\dots\partial^{\alpha_d}$ and
$|\alpha|=|\alpha_1|+\dots |\alpha_d|$. We will often use Stirling's formula:
$$
N!=N^N e^{-N}\sqrt{2\pi N}e^{\theta_N \over 12N},
$$
for some $0<\theta_N<1$, $N\in\Z.$ By $C^{\infty}(K)$
we denote the
set of smooth functions on a compact set $K\subset\subset U$ with smooth boundary, where $U
\subseteq \Rd$ is an open set, $C_K^{\infty}$ are smooth functions supported by $K$.
The closure of $ U\subset \Rd$ is denoted by $ \overline{U}$.
A conic neighborhood of $\xi_0 \in \Rd  \setminus 0$  is an open cone
$\Gamma \subset \Rd $ such that  $ \xi_0 \in \Gamma$.
%The convolution is given by $(f*g)(x)=\int_{\Rd}f(x-y)g(y) dy$, whenever the integral makes sense.
The Fourier transform of a  locally integrable function $f$ is defined as
$ \widehat{f}(\xi)=\int_{{\Rd}}f(x)e^{-2\pi i x\xi}dx$, $\xi \in \Rd$,
and the definition extends to distributions by duality.
Open ball of radius $r$, centered at $x_0\in \Rd$ is denoted by $B_r(x_0)$.

\par

For locally convex topological spaces   $X$ and $Y$,
$X\hookrightarrow Y$ means that $X$ is dense in $Y$ and that the identity mapping
from $X$ to $Y$ is continuous, and we use
$ \varprojlim $  and $\varinjlim $ to denote the projective and inductive limit topologies
respectively.
By $X'$ we denote the strong dual of $X$ and by
$\langle \cdot, \cdot \rangle_{X}$ the dual pairing between $X$ and
$X'$.
As usual,  $\D'(U)$ stands for Schwartz distributions, and $\E'(U)$ for compactly supported distributions.

\subsection{Ultradifferentiable functions and wave-front sets} \label{definicije}

For the sake of the clarity of our exposition, in this subsection we recall
Komatsu's approach to the theory of ultradifferentiable functions, see \cite{Komatsuultra1},
and the notion of wave-front set in the context of the Gevrey regularity.

By $M_p = (M_p)_{p\in \N}$ we denote a sequence of positive numbers
such that the following conditions hold:
\begin{eqnarray*}
(M.0) & M_0=1; & \\
(M.1) & M_p ^2 \leq M_{p-1}M_{p+1}, & p\in \Z; \\
(M.2) & (\exists C> 0) \;\; M_{p+q}\leq C^{p+1} M_p M_q, & p,q\in \N; \\
(M.3)' & \suml_{p=1}^{\infty}\frac{M_{p-1}}{M_p}<\infty.
\end{eqnarray*}

Then $M_p$ also satisfies weaker conditions: $(M.1)'$ $M_p M_q\leq
M_{p+q}$ and $(M.2)' \; $  $(\exists C>0 )\;\;$ $M_{p+q}\leq C_q^{p+1} M_p$,
$p,q\in \N$.

\par

Let the sequence $M_p $  satisfy the conditions  $(M.0)-(M.3)'$ and
let $ U \subseteq \Rd$ be an open set. A function $\phi \in
C^{\infty}(U)$ is an {\em ultradifferentiable function of class} $(M_p)$
(resp. {\em of class} $ \{ M_p \} $) if for each
compact subset $ K\subset\subset U$ and each $h>0$, there exists
$C > 0$
(resp. for each
compact subset $ K\subset\subset U$ there exists
$h>0$ and
$C > 0$)
such that

\be \label{estimate-ultradiff}
\displaystyle \sup_{x\in K}
|\partial^{\alpha} \phi (x)| \leq C h^{|\alpha|}M_{|\alpha|}, \;\;\;
\alpha \in \N^d.
\ee

For a fixed compact set  $K \subset \Rd $ and  $h>0$,
$\phi \in \E^{\{M_p\},h}(K)$ if $\phi \in C^{\infty}(K)$  and if
\eqref{estimate-ultradiff} holds for some $C> 0$. If $\phi \in
C^{\infty}(\Rd)$ and $ \supp \phi \subset K, $ then $\phi \in
D_K^{\{M_p\},h} $. These spaces are Banach spaces under the norm
$$
\| \phi \|_{\E^{\{M_p\},h}(K)} =
\sup_{\alpha \in \N^d, x\in K} \frac{|\partial^{\alpha} \phi (x)|}{h^{|\alpha|}M_{|\alpha|}}.
$$

The spaces of ultradifferentiable functions of class  $ \{M_p\} $ and of class  $ (M_p) $
are respectively given by
$$
\E^{\{M_p\}}(U)=\varprojlim_{K\subset\subset U}\varinjlim_{h\to \infty}\E^{\{M_p\},h}(K)
= \bigcap_{K\subset\subset U} \bigcup_{h\to \infty}\E^{\{M_p\},h}(K),
$$
$$
\E^{(M_p)}(U)=\varprojlim_{K\subset\subset U}\varprojlim_{h\to 0}\E^{\{M_p\},h}(K)
= \bigcap_{K\subset\subset U} \bigcap_{h\to 0}\E^{\{M_p\},h}(K),
$$
and their strong duals are respectively called the space of ultradistributions of Roumieu type of class $M_p$
and the space of ultradistributions of Beurling type of class $M_p$.

The space of ultradifferentiable functions of class  $ \{M_p\} $ (resp. of class  $ (M_p) $)
with support in $K$ is given by
$$
\quad \D^{\{M_p\}} (U) =\varinjlim_{K\subset\subset U}\varinjlim_{h\to \infty}\D_K^{\{M_p\},h}
= \bigcup_{K\subset\subset U}\bigcup_{h\to \infty}\D_K^{\{M_p\},h}
$$
$$
\text{ (resp.}
\quad \D^{(M_p)} (U) =\varinjlim_{K\subset\subset U}\varprojlim_{h\to 0}\D_K^{\{M_p\},h}
= \bigcup_{K\subset\subset U}\bigcap_{h\to 0}\D_K^{\{M_p\},h} \text{)}
$$
and its strong dual is the space of compactly supported ultradistributions  of Roumieu type of class $M_p$
(resp. of Beurling type of class $M_p$).

In what follows, $ \E^{*}(U) $ and $ \D^{*} (U)$ stand for
$ \E^{\{M_p\}}(U)$ or $ \E^{(M_p)}(U)$, and for  $\D^{\{M_p\}} (U)$ or
$ \D^{(M_p)} (U)$, respectively.

In particular, if $M_p$ is the {\em Gevrey sequence}, $M_p=p!^t$, $t>1$, then $ \E^{\{p!^t\}}(U)$ and
$ \E^{(p!^t)}(U)$  are the {\em Gevrey classes} of ultradifferentiable functions commonly denoted by
${\E}_{t}(U)$. Note that $ p!^t $, $t>1$,
satisfies $(M.0)-(M.3)'$.
We refer to \cite{Komatsuultra1} for a detailed study of different classes
of ultradifferentiable functions and their duals.

\par

Next we recall the notion of wave-front set in the context of the Gevrey regularity.

Let there be given $t\geq 1$ and $(x_0,\xi_0)\in U\times \Rd\backslash\{0\}$.
Then the {\em Gevrey wave front set} $WF_t(u)$ of $u\in \D'(U)$
can be defined as follows: $(x_0,\xi_0)\not\in WF_t (u)$ if and only if there exists an open neighborhood
$\Omega$ of $x_0$, a conic neighborhood $\Gamma$ of $\xi_0$ and a bounded sequence
$u_N\in \E'(U)$, such that $u_N=u$ on $\Omega$ and
$$
%\be \label{Gevreycondition}
|\widehat u_N(\xi)|\leq A\, \frac{h^{N}N!^t }{|\xi|^{ N }}, \quad N\in { \Z},\,\xi\in\Gamma,
$$
%% \ee
for some $A,h>0$. The wave-front set of $u\in \D'(U)$ can be defined in an analogous way.
If $t=1$, then the Gevrey wave front set is sometimes called the {\em analytic wave front set}
and denoted by  $WF_A(u)$, $u\in \D'(U)$. The classical $C^\infty $ wave-front set of
$u\in \D'(U)$ can be also defined through its complement:
$(x_0,\xi_0)\not\in WF (u)$ if and only if there exists an open neighborhood
$\Omega$ of $x_0$, a conic neighborhood $\Gamma$ of $\xi_0$ and a smooth compactly supported function
$\phi$, equal to $1$ on $\Omega$ and
$$
|\widehat \phi u (\xi)|\leq  \frac{C_N}{(1+|\xi|)^{ N }}, \quad N\in { \Z},\,\xi\in\Gamma, C_N>0.
$$
We refer to \cite{HermanderKnjiga, Rodino, Foland} for details.

\section{$M_p^{\t,\s}$ sequences and the corresponding regularity classes} \label{sec-1}

In this section we observe the sequence
$M_p^{\t,\s}=p^{\t p^{\s}}$, $p\in \N$, where $\t>0$, $\s>1$ and
study its basic properties. Although $M_p^{\t,\s}$ fails to satisfy the condition (M.2),
the flexibility obtained by introducing the two-parameter dependence
enables us to introduce smooth  functions which are less regular than the Gevrey functions.
In a separate subsection we define ultradifferentiable functions of class $M_p^{\t,\s}$ and study their main
properties.

\subsection{The defining sequence $M_p^{\t,\s}$}\label{SekcijaOsobineNiza}

The following lemma captures the basic properties of the sequence
$M_p^{\t,\s}=p^{\t p^{\s}}$, $p\in \N$, $\t>0$, $\s>1$, $M_0 ^{\t,\s}= 1$.
We refer to \cite{PTT-01} for the proof.

\begin{lema} \label{osobineM_p_s}
Let $\tau>0$, $\s>1$ and $M_p^{\tau,\s}=p^{\tau p^{\s}}$, $p\in \Z$, $M_0^{\tau,\s}=1$.
Then, apart from  $(M.1)$ and $(M.3)'$ the sequence
 $M_p^{\tau,\s}$ satisfies the following properties.

$\widetilde{(M.2)'}$
$M_{p+q}^{\tau,\s}\leq C_q^{p^{\s}}M_p^{\tau,\s}$, for some  sequence $C_q\geq 1$, $p,q\in \N$,  \medskip

$\widetilde{(M.2)}$ $M_{p+q}^{\tau,\s}\leq
C^{p^{\s} + q^{\s}}M_p^{\tau 2^{\s-1},\s}M_q^{\tau 2^{\s-1},\s}$,
$p,q\in \N$, for some constant $ C>1$.

Furthermore, there exist $A,B,C>0$ such that
$$
%\label{niz1}
M_p^{\tau,\s}\leq A C^{p^{\s}}{\lf p^{\s}
\rf}!^{\tau/\s}\quad  and \quad {\lf p^{\s}  \rf}!^{\tau/\s}\leq B
M_p^{\tau,\s}.
$$

\end{lema}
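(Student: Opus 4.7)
The approach hinges on two observations: that $\log M_p^{\t,\s}=\t p^{\s}\log p$ is a smooth convex function of $p$ for $\s>1$, and that $M_p^{\t,\s}$ is comparable, up to factors of the form $C^{p^{\s}}$, with the factorial $\lf p^{\s}\rf!^{\t/\s}$ (for which Komatsu-type inequalities are standard). The last two inequalities of the lemma are therefore the backbone and I would prove them first.

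For $(M.1)$, direct computation of the second derivative $g''(p)=\t p^{\s-2}\bigl[\s(\s-1)\log p+2\s-1\bigr]>0$ on $[1,\infty)$ gives convexity of $g$, hence $2\log M_p\leq \log M_{p-1}+\log M_{p+1}$ for $p\geq 2$; the case $p=1$ is checked by hand using $M_0=M_1=1$. For $(M.3)'$, the mean value theorem gives $p^{\s}-(p-1)^{\s}\geq \s(p-1)^{\s-1}$, whence $M_{p-1}^{\t,\s}/M_p^{\t,\s}\leq p^{-\t\s(p-1)^{\s-1}}$ decays faster than any fixed power (since $\s>1$), and the series converges. For the Stirling comparison, $N!\leq N^{N}$ with $N=\lf p^{\s}\rf\leq p^{\s}$ gives $\lf p^{\s}\rf!^{\t/\s}\leq p^{\t p^{\s}}=M_p^{\t,\s}$ immediately, so $B=1$ works; the reverse inequality follows from $\log(N!)=N\log N-N+O(\log N)$ by exponentiating and absorbing the linear error into a factor $(e^{\t/\s})^{p^{\s}}$.

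The multiplicative conditions are obtained by taking logarithms and combining $(p+q)^{\s}\leq 2^{\s-1}(p^{\s}+q^{\s})$ with $\log(1+t)\leq t$. For $\widetilde{(M.2)}$ this reduces to controlling
\[
2^{\s-1}\t\bigl(p^{\s}\log(1+q/p)+q^{\s}\log(1+p/q)\bigr)\leq 2^{\s-1}\t\bigl(p^{\s-1}q+q^{\s-1}p\bigr),
\]
and Young's inequality $p^{\s-1}q\leq \tfrac{\s-1}{\s}p^{\s}+\tfrac{1}{\s}q^{\s}$ (with its symmetric counterpart) bounds the right-hand side by a constant multiple of $p^{\s}+q^{\s}$; this yields $C=e^{2^{\s}\t}$. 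For $\widetilde{(M.2)'}$, with $q$ fixed and $p\geq q$, I split
\[
M_{p+q}^{\t,\s}=(p+q)^{\t p^{\s}}\cdot (p+q)^{\t[(p+q)^{\s}-p^{\s}]};
\]
the first factor equals $M_p^{\t,\s}(1+q/p)^{\t p^{\s}}\leq M_p^{\t,\s}\,e^{\t q p^{\s}}$, while the second, using $(p+q)^{\s}-p^{\s}\leq \s\,2^{\s-1}q p^{\s-1}$ and $p^{\s-1}\log p\leq p^{\s}$, is dominated by $D_q^{p^{\s}}$; the finitely many values $p<q$ are absorbed by enlarging $C_q$.

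The main obstacle is $\widetilde{(M.2)}$: any attempt based on the cruder bound $p+q\leq 2pq$ introduces unwanted cross terms of the form $p^{\t q^{\s}}$ and $q^{\t p^{\s}}$ which cannot be absorbed into a product $M_p^{\t 2^{\s-1},\s}M_q^{\t 2^{\s-1},\s}$. Only the subadditivity-type inequality $(p+q)^{\s}\leq 2^{\s-1}(p^{\s}+q^{\s})$ together with Young's inequality applied to $p^{\s-1}q$ avoids this blow-up and yields the clean separation of variables demanded by the statement.
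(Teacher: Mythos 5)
Your proposal is correct, but note that the paper itself proves almost none of this lemma: it cites \cite{PTT-01} for everything except $(M.3)'$, which is proved inline right after the statement. For that one part your route and the paper's are essentially the same — both reduce to showing $p^{\s}-(p-1)^{\s}$ exceeds a multiple of $(p-1)^{\s-1}$ (you via the mean value theorem, the paper via $p^{\s}\geq (p-1)^{\s-1}p$), whence the ratio $M_{p-1}^{\t,\s}/M_p^{\t,\s}$ decays like $p^{-c(p-1)^{\s-1}}$ and the series converges. For the remaining items your arguments are sound and self-contained where the paper is not: the convexity computation for $(M.1)$, the bound $N!\leq N^N\leq (p^{\s})^{N}$ and Stirling for the two-sided comparison with $\lf p^{\s}\rf!^{\t/\s}$, and — the genuinely delicate point, which you correctly isolate — the combination of $(p+q)^{\s}\leq 2^{\s-1}(p^{\s}+q^{\s})$, $\log(1+t)\leq t$ and Young's inequality $p^{\s-1}q\leq\frac{\s-1}{\s}p^{\s}+\frac{1}{\s}q^{\s}$ for $\widetilde{(M.2)}$, which explains why the constant $\t 2^{\s-1}$ (rather than $\t$) must appear on the right-hand side. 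One pedantic caveat: as literally stated, $\widetilde{(M.2)'}$ fails for $p=0$, $q\geq 2$ (the right-hand side is $1$), so your case split ``$p\geq q$ versus the finitely many $p<q$'' should either exclude $p=0$ or the exponent should be read as $p^{\s}+1$ in analogy with Komatsu's $(M.2)'$; this is an imprecision of the statement, not of your argument.
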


The property $(M.3)'$ implies that the corresponding spaces of ultradifferentiable functions (see Subsection \ref{subsec-classes})
are non-quasianalytic, i.e. they contain nontrivial compactly supported smooth functions. Let us now prove $(M.3)'$
by modifying the proof given in \cite{PTT-01}. Since the first summand in the series given in
$(M.3)'$  is equal to $1$, it is enough to observe the summation for $p \geq 2$.
Since $p^{\s}\geq (p-1)^{\s-1}p=(p-1)^{\s}+(p-1)^{\s-1}$, $p\in \Z$, we have
$$
\suml_{p=2}^{\infty}\frac{(p-1)^{\tau (p-1)^{\s}}}{p^{\tau p^{\s}}}
\leq
\suml_{p=2}^{\infty}
\frac{(p-1)^{\tau (p-1)^{\s}}}{ p^{\tau ( (p-1)^{\s} + (p-1)^{{\s}-1}) }}
 = \suml_{p=2}^{\infty}
\frac{ (1 - \frac{1}{p})^{\tau (p-1)^{\s}}}{p^{\tau (p-1)^{{\s}-1} }}.
$$
Since
$ \displaystyle
\tau p^{{\s}} \ln \Big(1+\frac{1}{p}\Big) = {\tau}\, p^{{\s}-1} \ln \Big(1+\frac{1}{p}\Big)^p \geq {\tau}\, p^{{\s}-1} \ln 2,$
$ p \geq 2, $ we obtain
$$
2^{\tau p^{{\s}-1}}\leq \Big(1+\frac{1}{p}\Big)^{\tau p^{{\s}}},\;\;\; p \geq 2,
$$
which gives
$$
\suml_{p=2}^{\infty}
\frac{ (1 - \frac{1}{p})^{\tau (p-1)^{\s}}}{p^{\tau (p-1)^{{\s}-1} }}
\leq
\suml_{p=2}^{\infty} \frac{1}{{(2p)^{\tau {(p-1)^{{\s}-1}}}}}   <\infty.
$$

\subsection{Regularity classes $\E_{\t,\s}$} \label{subsec-classes}

Let $\tau>0$, $\s>1$, $h>0$, and
$K\subset\subset U$, where $U$ is an open set in $\Rd$. A function $\phi \in C^\infty (U)$
is ultradifferentiable function of class $M_p^{\t,\s}$ if there exists $A>0$
such that
$$ \displaystyle |\partial^{\alpha}\phi(x)|\leq A
h^{|\alpha|^{\s}}|\alpha|^{\t|{\alpha}|^{\s}},\quad \alpha \in \N^d, x\in K.
$$
The space of  ultradifferentiable functions of class $M_p^{\t,\s}$ denoted  by
${\E}_{\t, {\s},h}(K)$ is a Banach space with the norm given by
\begin{equation} \label{Norma}
\| \phi \|_{{\E}_{\t, {\s},h}(K)}=\sup_{\alpha \in \N^d}\sup_{x\in K}
\frac{|\partial^{\alpha} \phi (x)|}{h^{|\alpha|^{\s}}|\alpha|^{\t|\alpha|^{\s}}}\,,
\end{equation}
and $ \displaystyle
{\E}_{\t_1, {\s_1},h_1}(K)\hookrightarrow {\E}_{\t_2,
{\s_2},h_2}(K)$, $0<h_1\leq h_2,$ $0<\t_1\leq\t_2$, $1<\s_1\leq \s_2.$
By \eqref{niz1}, the norm in \eqref{Norma} is equivalent to
$$
\|\phi\|^{\sim}_{{\E}_{\tau, {\s},h}(K)}=\sup_{\alpha \in
\N^d}\sup_{x\in K}\frac{|\partial^{\alpha} \phi
(x)|}{h^{|\alpha|^{\s}}\lf|\alpha|^{\s}\rf!^{\tau/\s}}<\infty,\quad
h>0.
$$

Let ${\D}^K_{\t, \s,h}$ be the set of functions in ${\E}_{\t,
\s,h}(K)$ with support contained in $K$. Then, in the topological sense,
we set
$$
%\label{NewClassesInd}
{\E}_{\{\t,
\s\}}(U)=\varprojlim_{K\subset\subset U}\varinjlim_{h\to
\infty}{\E}_{\t, {\s},h}(K),
$$
$$
%\label{NewClassesProj}
{\E}_{(\t,
\s)}(U)=\varprojlim_{K\subset\subset U}\varprojlim_{h\to 0}{\E}_{\t,
{\s},h}(K),
$$
$$
%\label{NewClassesInd2}
{\D}_{\{\t,
\s\}}(U)=\varinjlim_{K\subset\subset U} {\D}^K_{\{\t, \s\}}
=\varinjlim_{K\subset\subset U} \varinjlim_{h\to\infty}{\D}^K_{\t,
\s,h}\,,
$$
$$
%\label{NewClassesProj2}
{\D}_{(\t,
\s)}(U)=\varinjlim_{K\subset\subset U} {\D}^K_{(\t, \s)}
=\varinjlim_{K\subset\subset U} \varprojlim_{h\to 0}{\D}^K_{\t,
\s,h}.
$$
We will use abbreviated notation $ \t,\s $ for
$\{\t,\s\}$ or $(\t,\s)$ .
%It can be proved that the spaces ${\E}_{\t, \s}(U)$, ${\D}^K_{\t, \s}$ and ${\D}_{\t, \s}(U)$ are nuclear, cf. \cite{PTT-01}.

\begin{rem} %\label{EkvivalentneNorme}
If $ \t > 1 $ and $\s = 1$,
then $ {\E}_{\t, 1}(U)={\E}_{\t}(U)$  are the Gevrey classes
and $\D_{\t,1}(U)=\D_{\t}(U)$ are the corresponding subspaces of compactly supported functions in
$\E_{\t}(U)$. When $0<\t\leq 1$ and $ \s = 1$ such spaces are contained in the corresponding spaces of
quasianalytic functions. In particular, $\dss
\D_{\t}(U)=\{0\}$ when $0<\t\leq 1$.
\end{rem}

\par

Clearly, compactly supported Gevrey functions belong to ${\D}_{\{\t, \s\}}(U)$.
However, one can find a compactly supported
function in ${\D}_{\{\t, \s\}}(U)$  which is not in
${\D}_\t(U)$, for any $\t>1$.
We refer to \cite{PTT-01} for the proofs.

\par

It is known that the spaces $\E^{\{M_p\}}(U)$ are nuclear if the defining sequence  $M_p$ satisfies
$(M.2)'$, cf. \cite[ Theorem 2.6 ]{Komatsuultra1}. Although $M_p^{\t,\s}=p^{\t p^{\s}}$, $\t>0$, $\s>1$,
does not satisfy $(M.2)'$, it can be proved that
the spaces ${\E}_{\t, \s}(U)$, ${\D}^K_{\t, \s}$ and ${\D}_{\t, \s}(U)$
are nuclear as well. Again, we refer to  \cite{PTT-01} for the proof.

\par

The basic embeddings between the introduced spaces with respect to $\s$ and $\t$ are given in the following proposition.

\begin{prop}
\label{detectposition} Let $\s_1\geq 1$. Then for every $\s_2>\s_1$
and $\t>0$
\begin{equation}
\label{Theta_S_embedd} \varinjlim_{\t\to \infty}{\E}_{\t,
{\s_1}}(U)\hookrightarrow \varprojlim_{\t\to 0^+} {\E}_{\t,
{\s_2}}(U).
\end{equation} Moreover, if $0<\t_1<\t_2$, then for every $\s\geq 1$ it holds
\be \label{RoumieuBeurling} \E_{\{\t_1,\s\}}(U)\hookrightarrow
\E_{(\t_2,\s)}(U)\hookrightarrow \E_{\{\t_2,\s\}}(U), \ee
and
$$
\varinjlim_{\t\to \infty}{\E}_{\{\t, {\s}\}}(U)= \varinjlim_{\t\to \infty} {\E}_{(\t, {\s})}(U),
\;\;\;
\varprojlim_{\t\to 0^+}{\E}_{\{\t, {\s}\}}(U)= \varprojlim_{\t\to 0^+} {\E}_{(\t, {\s})}(U).
$$
\end{prop}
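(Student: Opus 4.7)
The plan is to reduce every claim to a pointwise comparison of the weight sequences $w_\alpha(\t,\s,h) := h^{|\alpha|^\s}|\alpha|^{\t|\alpha|^\s}$ at the Banach-space level $\E_{\t,\s,h}(K)$, and then pass to the projective or inductive limits in the definitions. The key technical input is a single lemma: whenever $\s_1<\s_2$ (with any $\t_1,\t_2>0$ and any $h,h'>0$), or $\s_1=\s_2=\s$ and $\t_1<\t_2$ (again with any $h,h'>0$), the ratio $w_\alpha(\t_1,\s_1,h)/w_\alpha(\t_2,\s_2,h')$ is bounded uniformly in $\alpha\in\N^d$. This is a straightforward logarithm estimate: in the first case the term $-\t_2|\alpha|^{\s_2}\log|\alpha|$ strictly dominates $\t_1|\alpha|^{\s_1}\log|\alpha|$ for $|\alpha|$ large, while in the second the negative coefficient $(\t_1-\t_2)$ multiplies the leading term $|\alpha|^\s\log|\alpha|$. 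In both cases the ratio tends to $0$ as $|\alpha|\to\infty$, which at once yields a continuous inclusion $\E_{\t_1,\s_1,h}(K)\hookrightarrow\E_{\t_2,\s_2,h'}(K)$.

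For \eqref{Theta_S_embedd}, the case $\s_1<\s_2$ of the lemma gives $\E_{\t,\s_1,h}(K)\hookrightarrow\E_{\t',\s_2,h'}(K)$ for arbitrary $\t,\t',h,h'>0$. Since the outer $\varprojlim$ over compact subsets of $U$ commutes with $\varinjlim_{h\to\infty}$ on the left and $\varprojlim_{h'\to 0^+}$ on the right, this inclusion lifts to $\varinjlim_{\t\to\infty}\E_{\t,\s_1}(U)\hookrightarrow\varprojlim_{\t\to 0^+}\E_{\t,\s_2}(U)$. For \eqref{RoumieuBeurling} the only nontrivial inclusion is $\E_{\{\t_1,\s\}}(U)\hookrightarrow\E_{(\t_2,\s)}(U)$, and it is exactly the content of the second case of the lemma: Roumieu membership supplies some single $h_0$ working on each compact $K$, and the lemma converts that into the bound at parameter $\t_2$ for every $h>0$ demanded by the Beurling definition. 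The inclusion $\E_{(\t_2,\s)}(U)\hookrightarrow\E_{\{\t_2,\s\}}(U)$ is immediate from the definitions, since a bound for all $h>0$ certainly gives a bound for some $h$.

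For the two equalities in part (3), I would combine \eqref{RoumieuBeurling} with the monotonicity of the limits. For any $\t<\t'$ the chain $\E_{(\t,\s)}(U)\hookrightarrow\E_{\{\t,\s\}}(U)\hookrightarrow\E_{(\t',\s)}(U)$ holds, and pushing $\t\to\infty$ through it forces $\varinjlim_{\t\to\infty}\E_{(\t,\s)}(U)=\varinjlim_{\t\to\infty}\E_{\{\t,\s\}}(U)$. Dually, for $0<\t_1<\t_2$ the chain $\E_{\{\t_1,\s\}}(U)\hookrightarrow\E_{(\t_2,\s)}(U)\hookrightarrow\E_{\{\t_2,\s\}}(U)$ gives the projective limit equality after letting $\t\to 0^+$. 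The only point that deserves a moment of care (though it is not a genuine obstacle) is to confirm, for the first claim, that the weight comparison is uniform when $\s_2-\s_1$ is small and $\t_1$ is large; the logarithmic argument handles this because the jump from $|\alpha|^{\s_1}$ to $|\alpha|^{\s_2}$ produces strict domination at infinity regardless of the coefficients. Notably, no step of the proof appeals to Komatsu's condition $(M.2)'$, consistent with the earlier observation that $M_p^{\t,\s}$ fails it.
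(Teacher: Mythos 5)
Your argument is correct and is essentially the expected one: the paper itself defers this proof to \cite{PTT-01, PTT-02}, where the embeddings are obtained from exactly the pointwise comparison of the weights $h^{p^{\s}}p^{\t p^{\s}}$ that you describe (the $\s_1<\s_2$ case by domination of $\t_2|\alpha|^{\s_2}\log|\alpha|$ over all competing terms, the fixed-$\s$ case by the sign of $\t_1-\t_2$), followed by passage to the inductive/projective limits via their universal properties and the interlacing argument for the two equalities. The one point you leave untouched is that the paper's convention is that $X\hookrightarrow Y$ includes density of $X$ in $Y$, which your weight comparison alone does not give; that requires a separate routine approximation argument, but the analytic core of the proposition is exactly as you present it.
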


For the proof of  \eqref{Theta_S_embedd} we refer to \cite{PTT-01} and complete proof of Proposition \ref{detectposition} can be found in \cite{PTT-02}.

We are also interested in projective
(when $\t \rightarrow 0^+ $ or when $\s \rightarrow 1^+$) and inductive
(when $\t \rightarrow \infty $  or when  $\s \rightarrow \infty$)
limits which we  denote as follows:
$$
%\label{0,inf,s}
\E_{0,\s}(U):=\varprojlim_{\t\to 0^+}\E_{\t,{\s}}(U), \;\;\;
\E_{\infty,\s}(U):=\varinjlim_{\t\to \infty}\E_{\t,{\s}}(U),
$$
$$
%\label{t,1,inf}
\E_{\t,1}(U):=\varprojlim_{\s\to 1^+}\E_{\t,{\s}}(U), \;\;\;
\E_{\t,\infty}(U):=\varinjlim_{\s\to \infty}\E_{\t,{\s}}(U),
$$
\be
\label{BorderLinePresek}
\E_{0,1}(U):= \varprojlim_{\s\to 1^+}\E_{0,\s}(U), \;\;\;
\E_{0,\infty}(U):=\varinjlim_{\s\to \infty}\E_{0,\s}(U),
\ee
\be
\label{BorderLineUnija}
\E_{\infty,1}(U):=\varprojlim_{\s\to 1^+}\E_{\infty,\s}(U), \;\;\;
\E_{\infty,\infty}(U):=\varinjlim_{\s\to \infty}\E_{\infty,\s}(U),
\ee
The proof of the following corollary can be found in \cite{PTT-02}.
\begin{cor}
With the notation from \eqref{BorderLinePresek} and \eqref{BorderLineUnija} the following strict embeddings
hold true:
$$
%\label{tildaprop1}
\varinjlim_{t\to\infty}\E_t(U)\hookrightarrow
{\E}_{0, 1}(U)\hookrightarrow {\E}_{\infty, 1}(U)
\hookrightarrow \E_{0,\infty}(U) \hookrightarrow
\E_{\infty,\infty}(U).
$$
%where $\E_t(U)$, $t>1$, denotes $\E^{\{p!^{\t}\}}(U)$ or $\E^{(p!^{\t})}(U)$.
\end{cor}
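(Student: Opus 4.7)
The plan is to decompose the chain into three formally easy continuous inclusions and one substantive one, and then separately exhibit separating functions to upgrade each inclusion to a strict one. The three easy links are the ``$\bigcap\subseteq\bigcup$'' type inclusions and the initial Gevrey-to-intersection step; the non-formal step is the middle link $\E_{\infty,1}(U)\hookrightarrow\E_{0,\infty}(U)$, which is where Proposition \ref{detectposition} does the real work.

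For the first embedding $\varinjlim_{t\to\infty}\E_t(U)\hookrightarrow\E_{0,1}(U)$, I would argue by a direct estimate on a compact $K$. If $\phi\in\E_t(U)$ then $|\partial^\alpha\phi(x)|\leq Ch^{|\alpha|}|\alpha|!^t$ for some $C,h>0$, and Stirling gives $|\alpha|!^t\leq C'|\alpha|^{t|\alpha|}$. For any prescribed $\s>1$, $\t>0$ and $h_0>0$, the comparison
\begin{equation*}
p^{\s}\bigl(\ln h_0+\t\ln p\bigr)-\bigl(tp\ln p+p\ln h\bigr)\longrightarrow +\infty,\qquad p\to\infty,
\end{equation*}
holds because $p^{\s}\ln p$ strictly dominates $p\ln p$ when $\s>1$; for the finitely many small $p$ the discrepancy is absorbed into the constant. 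This places $\phi$ in $\E_{\t,\s}(U)$ for every admissible pair, hence in $\E_{0,1}(U)$, and the same inequality yields continuity of the inclusion.

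For the second and fourth embeddings I would use $\E_{0,\s}(U)=\bigcap_{\t>0}\E_{\t,\s}(U)\subseteq\bigcup_{\t>0}\E_{\t,\s}(U)=\E_{\infty,\s}(U)$ for each fixed $\s>1$, and then take the projective (respectively inductive) limit in $\s$, a passage that preserves continuous inclusions. For the middle embedding I would fix any $\s_0>1$ and any $\s_2>\s_0$ and chain
\begin{equation*}
\E_{\infty,1}(U)\hookrightarrow\E_{\infty,\s_0}(U)\hookrightarrow\E_{0,\s_2}(U)\hookrightarrow\E_{0,\infty}(U),
\end{equation*}
where the first arrow is the canonical projection out of a projective limit, the second is \eqref{Theta_S_embedd}, and the third is the canonical map into an inductive limit. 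Composition yields the desired continuous inclusion.

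The hard part will be strictness. For each gap one must exhibit a function in the larger space that fails to lie in the smaller, and this amounts to prescribing the exact growth of $|\partial^\alpha\phi|$. For the first gap one takes $\phi$ with $|\partial^\alpha\phi(x)|\asymp|\alpha|^{|\alpha|\,\omega(|\alpha|)}$ for a function $\omega$ tending to $+\infty$ more slowly than any power, so that $\phi$ escapes every Gevrey class $\E_t(U)$ yet still satisfies the $\E_{\t,\s}(U)$ bound for every $\s>1$ and $\t>0$. For the second and fourth gaps the same technique with a slightly modified rate $\psi(p)=\t_0 p^{\s}$ at a single value $\t_0$ separates the Beurling intersection from the Roumieu union in $\t$. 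The middle gap is the most delicate, since both the $\t$-character and the $\s$-character flip simultaneously, so the separating function must live in some $\E_{0,\s_0}(U)$ but not in $\E_{\infty,\s}(U)$ for any $\s$ arbitrarily close to $1$; a workable candidate is a lacunary trigonometric series $\phi(x)=\chi(x)\sum_k a_k e^{i\lambda_k\cdot x}$ with a smooth cut-off $\chi$ and carefully spaced $(\lambda_k,a_k)$. Producing the tight lower bound on the derivatives, rather than the corresponding upper bound, is the main technical obstacle, and is the point at which the constructions from \cite{PTT-01,PTT-02} are essential.
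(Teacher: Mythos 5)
The paper gives no internal proof of this corollary (it defers to \cite{PTT-02}), so your argument can only be measured against what the statement requires. Your treatment of the continuous inclusions is correct and is the natural route: the first arrow by comparing $tp\ln p$ with $\t p^{\s}\ln p$, the second and fourth by pushing the trivial inclusion $\E_{0,\s}(U)\subseteq\E_{\infty,\s}(U)$ through the limits in $\s$, and the middle arrow by factoring through \eqref{Theta_S_embedd}. (The paper's $\hookrightarrow$ also asserts density, which you never address, but that is secondary.)

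The genuine gap is in the strictness part, and it is not only that you defer the constructions to \cite{PTT-01, PTT-02}. Your own argument for the middle arrow proves more than you use: since $\E_{\infty,\s_1}(U)\hookrightarrow\E_{0,\s_2}(U)$ for every $\s_2>\s_1$, any $\phi\in\E_{\infty,1}(U)=\bigcap_{\s>1}\E_{\infty,\s}(U)$ lies in $\E_{0,\s_2}(U)$ for every $\s_2>1$ (pick $\s_1\in(1,\s_2)$), so with the definitions \eqref{BorderLinePresek}--\eqref{BorderLineUnija} one gets $\E_{0,1}(U)=\E_{\infty,1}(U)$ as sets, and the same argument applied to the unions gives $\E_{0,\infty}(U)=\E_{\infty,\infty}(U)$. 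Consequently no separating function for the second or fourth arrow can exist, and your proposed witness does in fact fail: a function with exact derivative growth $|\alpha|^{\t_0|\alpha|^{\s_0}}$ belongs to $\E_{(\t,\s)}(U)$ for every $\t>0$ once $\s>\s_0$, hence to $\E_{0,\infty}(U)$; it separates $\E_{0,\s_0}$ from $\E_{\infty,\s_0}$ at the fixed level $\s_0$, not the borderline spaces obtained after the limit in $\s$. At most the first and third arrows can be strict; for those your growth prescriptions ($|\alpha|^{|\alpha|\omega(|\alpha|)}$ with $\omega$ subpolynomial, respectively exact growth $|\alpha|^{\t_0|\alpha|^{\s_0}}$) are the right ones, but the whole content of strictness is the existence of a compactly supported smooth function whose derivatives admit the matching \emph{lower} bound, and that construction is precisely what you leave to the references. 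As written, the proposal establishes the chain of continuous inclusions, proves none of the claimed strictnesses, and commits itself to producing witnesses for two arrows where, by your own middle-link argument, none can exist.
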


Recall that the Komatsu's condition  $(M.2)'$, also known as "stability under differential operators",
is sufficient to ensure that the spaces ${\E}^*(U)$ are closed under the differentiation, cf. \cite[Theorem 2.10]{Komatsuultra1}.
In the next Proposition we show that  ${\E}_{\t, \s}(U)$ is closed
under the finite order differentiation, although the condition  $(M.2)'$ is violated.

\par

\begin{te} \label{tealgebra}
Let $U$ be open in $\Rd$, and let  $\t>0$ and $\s>1$.
Then the space ${\E}_{\t, \s}(U)$ is closed under pointwise
multiplications and finite order differentiation.
\end{te}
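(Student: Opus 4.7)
The plan is to treat pointwise multiplication and finite-order differentiation separately. In both cases the key is the elementary two-sided inequality
\begin{equation*}
a^\sigma+b^\sigma\leq (a+b)^\sigma\leq 2^{\sigma-1}(a^\sigma+b^\sigma),\qquad a,b\geq 0,\ \sigma\geq 1,
\end{equation*}
which converts between bounds at $(|\alpha|+|\beta|)^\sigma$ and the sum $|\alpha|^\sigma+|\beta|^\sigma$, combined with the property $\widetilde{(M.2)'}$ from Lemma \ref{osobineM_p_s}. Although $(M.2)'$ itself fails, the condition $\sigma>1$ provides enough room to absorb combinatorial and exponential-in-$|\alpha|$ losses into a mild inflation of the base constant $h$ appearing in the factor $h^{|\alpha|^\sigma}$.

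For pointwise multiplication of $\phi,\psi\in\mathcal{E}_{\tau,\sigma}(U)$, I would fix $K\subset\subset U$ and apply Leibniz's rule. Using the basic estimates for $\phi$ and $\psi$ with constants $h,k,A,B$, together with $\binom{\alpha}{\beta}\leq 2^{|\alpha|}$ and $|\beta|,|\alpha-\beta|\leq|\alpha|$, the algebraic part satisfies
\begin{equation*}
|\beta|^{\tau|\beta|^\sigma}|\alpha-\beta|^{\tau|\alpha-\beta|^\sigma}\leq |\alpha|^{\tau(|\beta|^\sigma+|\alpha-\beta|^\sigma)}\leq |\alpha|^{\tau|\alpha|^\sigma}
\end{equation*}
by superadditivity, while $h^{|\beta|^\sigma}k^{|\alpha-\beta|^\sigma}\leq \max(h,k)^{|\alpha|^\sigma}$. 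The remaining counting factor $2^{|\alpha|}(|\alpha|+1)^d$ is dominated, for any $\eta>0$, by $C_\eta e^{\eta|\alpha|^\sigma}=C_\eta (e^\eta)^{|\alpha|^\sigma}$, because $\sigma>1$ forces $|\alpha|^\sigma/|\alpha|\to\infty$. This yields an estimate of the required form with the same $\tau$ and with base $e^\eta\max(h,k)$; in the Roumieu class some $h'$ suffices, and in the Beurling class one picks $h,k$ and $\eta$ small to beat any prescribed threshold.

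For differentiation, fix $\beta\in\mathbf{N}^d$ and set $\psi=\partial^\beta\phi$, so $\partial^\alpha\psi=\partial^{\alpha+\beta}\phi$. The estimate on $\phi$ at order $\alpha+\beta$ can be factored as follows. Using $(|\alpha|+|\beta|)^\sigma\leq 2^{\sigma-1}(|\alpha|^\sigma+|\beta|^\sigma)$ gives
\begin{equation*}
h^{(|\alpha|+|\beta|)^\sigma}\leq \bigl(h^{2^{\sigma-1}}\bigr)^{|\alpha|^\sigma}\cdot h^{2^{\sigma-1}|\beta|^\sigma},
\end{equation*}
the second factor being a $\beta$-dependent constant. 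For the algebraic part I would apply $\widetilde{(M.2)'}$: $M_{|\alpha|+|\beta|}^{\tau,\sigma}\leq C_{|\beta|}^{|\alpha|^\sigma}M_{|\alpha|}^{\tau,\sigma}=C_{|\beta|}^{|\alpha|^\sigma}|\alpha|^{\tau|\alpha|^\sigma}$. Collecting produces a bound of the form $A'\bigl(h^{2^{\sigma-1}}C_{|\beta|}\bigr)^{|\alpha|^\sigma}|\alpha|^{\tau|\alpha|^\sigma}$, again with the same $\tau$; the Beurling case is handled by choosing the input $h$ small enough that $h^{2^{\sigma-1}}C_{|\beta|}$ falls below any prescribed target.

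The delicate point throughout is preserving the parameter $\tau$ intact. A careless appeal to $\widetilde{(M.2)}$ in the multiplication step would replace $\tau$ by $\tau\cdot 2^{\sigma-1}$ and would only place the product in the larger class $\mathcal{E}_{\tau 2^{\sigma-1},\sigma}(U)$. Using superadditivity for multiplication and $\widetilde{(M.2)'}$ for differentiation (which keeps the first index fixed and localizes the loss in the $\beta$-dependent constant) is what closes the argument inside $\mathcal{E}_{\tau,\sigma}(U)$; the assumption $\sigma>1$ is essential in order to absorb all subdominant $|\alpha|$-factors into $h'^{|\alpha|^\sigma}$.
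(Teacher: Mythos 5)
Your overall strategy coincides with the paper's: Leibniz's rule combined with the superadditivity $|\beta|^{\sigma}+|\alpha-\beta|^{\sigma}\le|\alpha|^{\sigma}$ (equivalently the $(M.1)'$-type bound for $M_p^{\tau,\sigma}$) for products, and $\widetilde{(M.2)'}$ together with the splitting $(|\alpha|+|\beta|)^{\sigma}\le 2^{\sigma-1}(|\alpha|^{\sigma}+|\beta|^{\sigma})$ for derivatives; absorbing the combinatorial factor $2^{|\alpha|}$ into a factor of the form $(e^{\eta})^{|\alpha|^{\sigma}}$ is exactly the paper's $2^{|\alpha|}\le 2^{|\alpha|^{\sigma}}$. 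The Roumieu half of your argument is correct as written (there one may assume $h,k\ge 1$).

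There is, however, a genuine slip in the Beurling half: two of your displayed inequalities reverse direction when the base is below $1$, which is precisely the regime that case requires. If $\max(h,k)<1$, then superadditivity gives $\max(h,k)^{|\beta|^{\sigma}+|\alpha-\beta|^{\sigma}}\ge\max(h,k)^{|\alpha|^{\sigma}}$, so $h^{|\beta|^{\sigma}}k^{|\alpha-\beta|^{\sigma}}\le\max(h,k)^{|\alpha|^{\sigma}}$ fails; likewise $h^{(|\alpha|+|\beta|)^{\sigma}}\le\bigl(h^{2^{\sigma-1}}\bigr)^{|\alpha|^{\sigma}}h^{2^{\sigma-1}|\beta|^{\sigma}}$ fails for $h<1$, since $(|\alpha|+|\beta|)^{\sigma}\le 2^{\sigma-1}(|\alpha|^{\sigma}+|\beta|^{\sigma})$ then yields the opposite comparison. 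The repair is the paper's device of adjusting the exponent according to the size of $h$: for multiplication measure the inputs with $c_h=\min\{h,h^{2^{\sigma-1}}\}$, so that for $h<1$ one has $c_h^{|\beta|^{\sigma}}c_h^{|\alpha-\beta|^{\sigma}}=h^{2^{\sigma-1}(|\beta|^{\sigma}+|\alpha-\beta|^{\sigma})}\le h^{|\alpha|^{\sigma}}$; for differentiation use $c_h'=\max\{h,h^{2^{\sigma-1}}\}$, noting that for $h<1$ plain superadditivity already gives $h^{(|\alpha|+|\beta|)^{\sigma}}\le h^{|\alpha|^{\sigma}}h^{|\beta|^{\sigma}}$. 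With this case distinction inserted, your argument becomes the paper's proof; your closing remark that one must avoid $\widetilde{(M.2)}$ (which would inflate $\tau$ to $\tau 2^{\sigma-1}$, as happens in Theorem \ref{TeoremKonstant}) is exactly the right point.
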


\begin{proof} We leave to the reader to prove that the spaces are closed under
translations and dilations and show the algebra property first.

Let $K\subset\subset\Rd$ and for $h>0$ set
$c_h=\min\{h,h^{2^{\s-1}}\}$. Then for $\phi,\psi\in {\E}_{\t,
\s,c_h}(K)$, by the Leibnitz formula we obtain
\begin{eqnarray*}
||\phi\psi||_{{\E}_{\t, \s,2h}(K)}&\leq&
\sup_{\alpha\in \N^d}\sum_{\beta \leq \alpha} {\alpha \choose \beta}
\frac{c_h^{{|\alpha-\beta|}^{\s}}c_h^{|\beta|^{\s}}|\alpha-\beta|^{\t|\alpha-\beta|^{\s}}|\beta|^{\t |\beta|^{\s}}}{(2h)^{|\alpha|^{\s}}|\alpha|^{\t|\alpha|^{\s}}}
\nonumber\\
&\cdot&||\phi||_{{\E}_{\t, \s,c_h}(K)}||\psi||_{{\E}_{\t,
\s,c_h}(K)}. \label{proizvod-estimate}
\end{eqnarray*}

If $h\geq 1$, then we put $c_h=h$. Note that
$|\alpha-\beta|^{\s}+|\beta|^{\s}\leq |\alpha|^{\s}$ when $\beta\leq \alpha$.
By $(M.1)$ property of $M_p^{\t,\s}$ we then have
$$
\sum_{\beta \leq \alpha} {\alpha \choose \beta}\frac{c_h^{{|\alpha-\beta|}^{\s}}
c_h^{|\beta|^{\s}}|\alpha-\beta|^{\t|\alpha-\beta|^{\s}}|\beta|^{\t |\beta|^{\s}}}{(2h)^{|\alpha|^{\s}}|\alpha|^{\t|\alpha|^{\s}}}
\leq \frac{2^{|\alpha|} h^{|\alpha|^{\s}}}{(2h)^{|\alpha|^{\s}}}\leq 1,\quad \alpha\in \N^d.
$$

If $0<h<1$, then $c_h=h^{2^{\s-1}}$, and
$$
(1/h)^{|\alpha|^{\s}}\leq
(1/h)^{2^{\s-1}|\alpha-\beta|^{\s}}(1/h)^{2^{\s-1}|\beta|^{\s}},
\;\;\;
\beta\leq \alpha.
$$
which gives
$$\sum_{\beta \leq \alpha} {\alpha \choose \beta}
\frac{c_h^{{|\alpha-\beta|}^{\s}}c_h^{|\beta|^{\s}}|\alpha-\beta|^{\t|\alpha-\beta|^{\s}}|\beta|^{\t |\beta|^{\s}}}{(2h)^{|\alpha|^{\s}}|\alpha|^{\t|\alpha|^{\s}}}
\leq \frac{2^{|\alpha|}}{2^{|\alpha|^{\s}}} \leq 1,\quad \alpha\in \N^d,
$$
that is
$$
\|\phi\psi \|_{{\E}_{\t, \s,2h}(K)}
\leq \|\phi \|_{{\E}_{\t, \s,c_h}(K)} \|\psi \|_{{\E}_{\t,
\s,c_h}(K)},
$$
wherefrom the algebra property holds.

\par

To  prove that  ${\E}_{\t, \s}(U)$ is closed under differentiation we fix $\beta\in
\N^d$, and set $c_h'=\max\{h,h^{2^{\s-1}}\}$, $h>0$. Then, for
every $x\in K$, from $\widetilde{(M.2)'}$ it follows that
\begin{eqnarray}
|(\partial^{\alpha+\beta}\phi(x))|&\leq & ||\phi||_{{\E}_{\t, \s,h}(K)}
h^{|\alpha+\beta|^{\s}}|\alpha+\beta|^{\t|\alpha+\beta|^{\s}} \nonumber \\
&\leq&
||\phi||_{{\E}_{\t, \s,h}(K)} C_h'^{|\beta|^{\s}}
(C_{|\beta|}c_h')^{|\alpha|^{\s}}|\alpha|^{\t|\alpha|^{\s}}\,,  \nonumber
\end{eqnarray}
where $C_h'=\max\{1,h^{2^{\s-1}}\}$ and $C_{|\beta|}$
is the constant from  $\widetilde{(M.2)'}$
(see Lemma \ref{osobineM_p_s} for $q=|\beta|$).
This implies that for every $h>0$ there exists $C_{h,\beta}>0$ such that
$$\dss||\partial^{\beta}\phi||_{{\E}_{\t, \s,C_{|\beta|}c_h'}(K)}\leq C_{h,\beta}||\phi||_{{\E}_{\t, \s,h}(K)},$$
and the statement follows.
\end{proof}

We conclude this section with the continuity properties of certain
ultradifferentiable operators $P(x,\partial)$ acting on
${\E}_{\tau, \s}(U)$. Note that Komatsu's condition $(M.2)$
provides the stability of  ${\E}^*(U)$ under the action of ultradifferentiable operators,
cf. \cite[Theorem 2.12]{Komatsuultra1}.
The following theorem shows that the condition $\widetilde{(M.2)}$ provides instead only
the continuity of certain ultradifferentiable operators from
${\E}_{\t, \s}(U) $ into $ {\E}_{\t2^{\s-1}, \s}(U)$.
We refer to \cite[Theorem 2.1]{PTT-02} for a more general result which involves
ultradifferentiable operators with non constant coefficients.

\begin{te}
\label{TeoremKonstant} Let $U$ be open in $\Rd$, $\t>0$ and $\s>1$.
If
$P(\partial)=\suml_{|\alpha|=0}^{\infty}a_{\alpha}{\partial}^{\alpha}$
is a constant coefficient differential operator of infinite order
such that for some $L>0$ and $A>0$  (resp. every $L>0$ there exists
$A>0$) such that \be \label{constantKoef} |a_{\alpha}|\leq A
\frac{L^{|\alpha|^{\s}}}{|\alpha|^{\t 2^{\s-1}{|\alpha|}^{\s}}}, \ee
then ${\E_{\infty,\s}(U)}$  is
closed under action of $P(\partial)$. In particular,

$$P(\partial):\quad {\E}_{\t, \s}(U) \longrightarrow {\E}_{\t
2^{\s-1}, \s}(U)\,, $$ is continuous linear mapping, where
${\E}_{\t, \s}(U)$ denotes ${\E}_{(\t, \s)}(U)$ or ${\E}_{\{\t,
\s\}}(U)$.
\end{te}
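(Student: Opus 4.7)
The plan is to derive a pointwise bound on $\partial^\beta(P(\partial)\phi)(x)$ that is compatible with the definition \eqref{Norma} of the seminorm on the target space $\E_{\t 2^{\s-1},\s,h''}(K)$. Fixing $K \subset\subset U$ and $\phi \in \E_{\t,\s,h}(K)$, I would start from the term-by-term expansion
\begin{equation*}
\partial^\beta(P(\partial)\phi)(x) = \sum_{\alpha \in \N^d} a_\alpha \partial^{\alpha+\beta}\phi(x),
\end{equation*}
and insert the coefficient hypothesis \eqref{constantKoef} together with the pointwise estimate $|\partial^{\alpha+\beta}\phi(x)| \leq \|\phi\|_{\E_{\t,\s,h}(K)}\, h^{|\alpha+\beta|^\s} |\alpha+\beta|^{\t|\alpha+\beta|^\s}$ coming from \eqref{Norma}.

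The decisive algebraic step would be an application of $\widetilde{(M.2)}$ from Lemma \ref{osobineM_p_s} in the form
\begin{equation*}
|\alpha+\beta|^{\t|\alpha+\beta|^\s} \leq C^{|\alpha|^\s + |\beta|^\s}\, |\alpha|^{\t 2^{\s-1}|\alpha|^\s}\, |\beta|^{\t 2^{\s-1}|\beta|^\s},
\end{equation*}
which makes the $|\alpha|^{\t 2^{\s-1}|\alpha|^\s}$ factor cancel \emph{exactly} against the denominator in \eqref{constantKoef}. This exact cancellation is precisely what motivates the exponent $\t 2^{\s-1}$ both in the coefficient hypothesis and in the target class. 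The leftover factor $h^{|\alpha+\beta|^\s}$ I would decouple via the convexity bounds $|\alpha|^\s + |\beta|^\s \leq |\alpha+\beta|^\s \leq 2^{\s-1}(|\alpha|^\s + |\beta|^\s)$ as $h^{|\alpha+\beta|^\s} \leq \tilde h^{|\alpha|^\s}\tilde h^{|\beta|^\s}$ for $\tilde h := \max\{h, h^{2^{\s-1}}\}$, in direct analogy with the $c_h$ device in the proof of Theorem \ref{tealgebra}. Putting these ingredients together and summing over $\alpha$, I obtain
\begin{equation*}
|\partial^\beta(P(\partial)\phi)(x)| \leq A\,\|\phi\|_{\E_{\t,\s,h}(K)}\,(C\tilde h)^{|\beta|^\s}\,|\beta|^{\t 2^{\s-1}|\beta|^\s}\, \sum_{\alpha \in \N^d} (CL\tilde h)^{|\alpha|^\s},
\end{equation*}
and the $\alpha$-series converges super-exponentially whenever $CL\tilde h < 1$, since $\s>1$.

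To close the continuity statement, I would match the parameters to the underlying topology in the two cases. In the Beurling case $\E_{(\t,\s)}(U)$, the coefficient bound is postulated for a fixed $L$, while $\|\phi\|_{\E_{\t,\s,h}(K)} < \infty$ for \emph{every} $h>0$; picking $h$ so small that $CL\tilde h < 1$ and then letting $h\to 0^+$ places $P(\partial)\phi$ in every $\E_{\t 2^{\s-1},\s,h''}(K)$, i.e.\ in $\E_{(\t 2^{\s-1},\s)}(U)$, with continuous dependence. In the Roumieu case $\E_{\{\t,\s\}}(U)$ the coefficient bound is available for every $L$, so for the particular $h$ provided by $\phi \in \E_{\t,\s,h}(K)$ I am free to choose $L$ small enough that $CL\tilde h < 1$; the same estimate then yields $P(\partial)\phi \in \E_{\t 2^{\s-1},\s, C\tilde h}(K) \hookrightarrow \E_{\{\t 2^{\s-1},\s\}}(U)$ with the requisite norm bound. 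Closedness of $\E_{\infty,\s}(U)$ under $P(\partial)$ is then automatic, since any $\phi\in \E_{\infty,\s}(U)$ sits in some $\E_{\t,\s}(U)$ and its image lies in $\E_{\t 2^{\s-1},\s}(U)\subset \E_{\infty,\s}(U)$.

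The main obstacle, and really the engine of the whole argument, is the exact cancellation carried out in the second step: $\widetilde{(M.2)}$ plays the role of the missing condition $(M.2)'$ from Komatsu's framework, but the price of its $2^{\s-1}$-factor is precisely the worsening $\t \mapsto \t 2^{\s-1}$ that forces the target class to be strictly larger than the source. The remainder of the proof is careful bookkeeping of the constants through the projective/inductive limit structure so that $L$, $h$ and $h''$ can be chosen compatibly.
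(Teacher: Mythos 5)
Your proposal is correct and follows essentially the same route as the paper's own proof: the term-by-term expansion, the exact cancellation via $\widetilde{(M.2)}$, the decoupling of $h^{|\alpha+\beta|^{\s}}$ through $c_h=\max\{h,h^{2^{\s-1}}\}$, the convergent sum $\sum_{\alpha}(LCc_h)^{|\alpha|^{\s}}$, and the choice of $h$ (Beurling) versus $L$ (Roumieu) to make that base small. Your write-up is in fact slightly more explicit than the paper's about how the two quantifier patterns in \eqref{constantKoef} pair with the two limit topologies, but the argument is the same.
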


\begin{proof}
Let $\phi\in {\E}_{\t, \s,h}(K)$, for some $h>0$.
Then, for $x\in K$, using \eqref{constantKoef} and $\widetilde{(M.2)}$ property of $M_p^{\t,\s}$ we obtain
\begin{eqnarray}
\label{constultradif}
|\partial^{\beta}(a_{\alpha}\partial^{\alpha}\phi(x))|&\leq&
 A ||\phi||_{{\E}_{\t, \s,h}(K)} \frac{L^{|\alpha|^{\s}}}{|\alpha|^{\t 2^{\s-1}{|\alpha|}^{\s}}}
 h^{{|\alpha+\beta|}^{\s}}(|\alpha+\beta|)^{{\t |\alpha+\beta|}^{\s}}\nonumber\\
&\leq & A ||\phi||_{{\E}_{\t, \s,h}(K)} \frac{L^{|\alpha|^{\s}}}{|\alpha|^{\t 2^{\s-1}{|\alpha|}^{\s}}}h^{|\alpha+\beta|^{\s}}C^{|\alpha|^{\s}}C^{|\beta|^{\s}}{|\alpha|^{\t 2^{\s-1}{|\alpha|}^{\s}}}|\beta|^{\t 2^{\s-1}|\beta|^{\s}}\nonumber\\
& \leq & A ||\phi||_{{\E}_{\t, \s,h}(K)} (L C c_h)^{|\alpha|^{\s}}(C
c_h)^{|\beta|^{\s}}|\beta|^{\t 2^{\s-1}|\beta|^{\s}} ,
\end{eqnarray} where for the last inequality we have used that for $\s>1$,
$$|\alpha|^{\s}+|\beta|^{\s}\leq|\alpha+\beta|^{\s}\leq 2^{\s-1}(|\alpha|^{\s}+|\beta|^{\s}),$$
$c_h=\max\{h,h^{2^{\s-1}}\}$ and $C>1$ is the constant from $\widetilde{(M.2)}$.
Note that $c_h=h$ when $0<h\leq 1$ and $c_h=h^{2^{\s-1}}$ when $h>1$.

Now, we may choose either $h>0$ or   $L>0$ such that $ L C c_h< 1/2 $
holds true.
Since $\dss
\sum_{|\alpha|=0}^{\infty}(1/2)^{|\alpha|^{\s}} < \infty $,
by taking the sum with respect to $\alpha$ and the supremum with
respect to $\beta$ and $x\in K$, from (\ref{constultradif}) it
follows that $$
||P(\partial)\phi||_{{\E}_{\t2^{\s-1}, \s,C c_h}(K)}\leq C'
||\phi||_{{\E}_{\t, \s,h}(K)}, $$ for some $C'>0$ and the theorem
is proved, since the result for ${\E_{\infty,\s}(U)}$  follows immediatelly.
\end{proof}

\section{Local analysis of distributions with respect to $\E_{\t,\s}$} \label{sec-2}

In this section we study local behavior of distributions
by the means of appropriate localized versions of their Fourier transforms.
The localization is defined by the means of  $\t,\s$-admissible sequences of smooth functions,
see Definition \ref{definicijaNiza}. We first describe the process of enumeration, which is one of the main tolls in our analysis.

\par

Let $\t>0$, $\s>1$, $\Omega\subseteq K\subset \subset U\subseteq\Rd$, where $\Omega$ and $U$ are open in $\Rd$, and the closure of
$\Omega$ is contained in $K$.
Let $u\in \D'(U)$. We observe the nature of its regularity with respect to the condition
 \begin{equation}
\label{uslov3'}
|\widehat u_N(\xi)|\leq A\, \frac{h^{N} N!^{\t/\s}}{|\xi|^{\lfloor N^{1/\s} \rfloor}}, \quad N\in { \N},\,\xi\in \Rd\backslash\{0\},
\end{equation}
where $\{u_N\}_{N\in \N}$ is bounded sequence in $\E'(U)$ such that $u_N=u$ in $\Omega$ and $A,h$ are some positive constants.

\par

One of the main ingredients of the following proofs is
the procedure which we call {\em enumeration}
and which consists of  a change of variables in indices which "speeds up" or "slows down"
the decay estimates of single members of the corresponding sequences,
while preserving their asymptotic behavior when $N \rightarrow \infty$. In other words,
although estimates for terms of a sequence before and after
enumeration are different, the  asymptotic behavior of the whole sequence remains unchanged.
Therefore, the condition (\ref{uslov3'})
is equivalent to another condition obtained after replacing $N$ with positive, increasing sequence $a_N$
such that $a_N\to \infty$, $N\to \infty$. We then  write $N\to a_N$ and $u_N$ instead of $u_{ a_N }$.

For example, applying Stirling's formula to (\ref{uslov3'}) we obtain
\begin{equation}
\label{uslov3''}
|\widehat u_N(\xi)|\leq A_1\, \frac{h_1^{N} N^{\frac{\t}{\s} N} }{|\xi|^{\lfloor N^{1/\s} \rfloor}}, \quad N\in { \N},\,\xi\in \Rd\backslash\{0\},
\end{equation}
for some positive constants $A_1,h_1$. After enumeration $N\to N/\t$ and writing $u_N$
instead of the $u_{N/\t}$, (\ref{uslov3''}) becomes
$$
|\widehat u_N(\xi)|\leq A_1\, \frac{h_1^{N/\t} (N/\t)^{\frac{\t}{\s} (N/\t)} }{|\xi|^{\lfloor (N/\t)^{1/\s} \rfloor}}
\leq A_2\, \frac{h_2^{N} N!^{1/\s}}{|\xi|^{\lfloor (N/\t)^{1/\s} \rfloor}}, \quad N\in { \N},\,\xi\in \Rd\backslash\{0\}
$$
for some $A_2,h_2>0$. Moreover, if $\{u_N\}_{N\in \N}$ is  bounded sequence in $\E'(U)$, then $\{u_{N/\t}\}_{N\in \N}$
is also bounded in $\E'(U)$ (with respect to the strong topology).

In  \cite[Proposition 8.4.2]{HermanderKnjiga} H\"ormander used a sequence of carefully chosen
cutoff functions $\{\chi_N\}_{N\in \N}$ to define the analytic wavefront set $WF_A$.
We modify that to define and analyze a new type of wavefront sets in $\D'(U)$ related to \eqref{uslov3'} or
\eqref{uslov3''}.

\begin{de}
\label{definicijaNiza}
Let $\t>0$, $\s>1$, and $\Omega\subseteq K\subset\subset U$,
such that $\overline{\Omega} $ is strictly contained in $ K$.
A sequence $\{\chi_N\}_{N\in \N}$ of functions in $C^{\infty}_K$
is said to be {\em  $\t,\s$-admissible  with respect to $K$} if
\begin{itemize}
\item[a)] $\chi_N=1$ in a neighborhood of $\Omega$, for every $N\in \N$,
\item[b)] there exists a positive sequence $C_{\beta}$ such that
\begin{equation}
\label{ocenaNiz}
\sup_{x\in K}
|D^{\alpha+\beta}\chi_{N}(x)|\leq C_{\beta}^{|\alpha|+1}
\lfloor N^{1/\s}\rfloor ^{|\alpha|},
\quad |\alpha|\leq
\lfloor (N/\t)^{1/\s}  \rfloor,
\end{equation} for every $N \in \N$ and  $\beta\in {\N}^d$.
\end{itemize}
\end{de}

\begin{rem}
When  $\t=\s=1$ we recover the sequence $\{\chi_N\}_{N\in \N}$
used by H\"ormander to analyze the analytic behavior of distributions.
Moreover, note that for $\s>1$ and $0<\t\leq 1$, $\{\chi_{\t N^{\s}}\}_{N\in \N}$
gives another sequence with the same asymptotic properties as  $\{\chi_N\}_{N\in \N}$.
This implies that, for $\s>1$ and $0<\t\leq 1$, the enumeration $N\to \t N^{\s}$
in (\ref{ocenaNiz}) may be used to define the analytic wave-front sets.
\end{rem}

\begin{rem}
\label{PaleyRemark}
From  \eqref{ocenaNiz} it follows that
\be
\label{chiNPaley}
|\widehat \chi_N (\xi)|\leq A_{\beta}^{|\alpha|+1}\lfloor N^{1/\s}\rfloor ^{|\alpha|}\langle\xi\rangle^{-|\alpha|-|\beta|},\quad
|\alpha|\leq
\lfloor (N/\t)^{1/\s}  \rfloor,
\ee
for every $N \in \N, \xi\in \Rd$, where $\langle\xi\rangle=(1+|\xi|^2)^{1/2}$.
If $\alpha=0$ in \eqref{ocenaNiz}, then $\{\chi_N\}_{N\in \N}$ is bounded sequence in $C^{\infty}(U)$
and from \eqref{chiNPaley}, it follows that $\{\widehat\chi_N\}_{N\in \N}$ is bounded in the
Schwartz space ${\mathcal S}(\Rd)$.
From the boundedness of $\{\chi_N\}_{N\in \N}$ in $C^{\infty}(U)$, it follows that
$\{\chi_N u\}_{N\in \N}$ is bounded in $\E'(U)$ for every  $u\in \D'(U)$ .

Recall that if $\{u_N\}_{N\in \N}$ is a bounded sequence in $\E'(U)$ then
Paley-Wiener type theorems and $e^{-ix\cdot\xi}\in C^{\infty}(\Rd_x)$, for every $\xi\in \Rd$, imply
\begin{equation}
\label{ogranicenostUN}
|\widehat u_N(\xi)|=|\langle u_N,e^{-i\cdot\xi}\rangle|\leq C\langle\xi\rangle^M,
\end{equation} for some $C,M>0$ independent of $N$.
\end{rem}

The existence of the $\t,\s$-feasible sequences is
shown in the following Lemma.
We refer to \cite{PTT-02} for the proof, see also \cite[Theorems 1.3.5 and 1.4.2]{HermanderKnjiga}.

\begin{lema}
\label{ogranicenostNiza}
Let there be given $r>0$, $\t>0$, $\s>1$ and $x_0\in \Rd$.
There exists $\t,\s$-admissible sequence $\{\chi_N\}_{N\in \N}$ with respect to
$\overline{B_{2r}(x_0)}$ such that $\chi_N=1$ on $B_{r}(x_0)$, for every $N\in \N$.
\end{lema}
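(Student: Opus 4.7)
The plan is to follow the classical H\"ormander construction from \cite[Theorems 1.3.5 and 1.4.2]{HermanderKnjiga} (iterated convolution of the indicator of a slightly enlarged ball with narrow mollifiers) and to add a single fixed-scale mollifier designed to absorb the $\beta$-derivatives. Fix $\phi,\psi\in C_0^\infty(\Rd)$ nonnegative with $\int\phi=\int\psi=1$, $\operatorname{supp}\phi\subset B_1(0)$, $\operatorname{supp}\psi\subset B_{r/8}(0)$, and set $\phi_\epsilon(x)=\epsilon^{-d}\phi(x/\epsilon)$. For $N\geq 1$ let $n_N:=\max\{1,\lfloor(N/\tau)^{1/\sigma}\rfloor\}$, $\epsilon_N:=r/(8n_N)$, and define
$$
\chi_N:=\mathbf{1}_{\overline{B_{3r/2}(x_0)}}* \underbrace{\phi_{\epsilon_N}*\cdots *\phi_{\epsilon_N}}_{n_N\text{ factors}} *\psi,
$$
together with a fixed choice of $\chi_0\in C^\infty_0(B_{2r}(x_0))$ equal to $1$ on $B_r(x_0)$.

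First I would verify the geometric conditions. The kernel $\phi_{\epsilon_N}^{*n_N}*\psi$ has unit integral and is supported in the ball of radius $n_N\epsilon_N+r/8=r/4$. Consequently $\operatorname{supp}\chi_N\subset\overline{B_{3r/2+r/4}(x_0)}\subset \overline{B_{2r}(x_0)}=K$, and for any $x\in B_r(x_0)$ the indicator inside the integrand is identically $1$ on the full support of the kernel, so $\chi_N(x)=1$. In fact $\chi_N\equiv 1$ on $B_{5r/4}(x_0)$, a neighborhood of $\overline{B_r(x_0)}$, so property (a) of Definition \ref{definicijaNiza} holds.

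For the derivative estimate, fix $\alpha,\beta\in\N^d$ with $|\alpha|\leq\lfloor(N/\tau)^{1/\sigma}\rfloor\leq n_N$. Since $\partial$ passes through convolutions, one $\partial_j$ can be routed to each of $|\alpha|$ different factors $\phi_{\epsilon_N}$ while all of $\partial^\beta$ is placed on the single fixed mollifier $\psi$. Young's inequality then gives
$$
\|\partial^{\alpha+\beta}\chi_N\|_{L^\infty} \leq \|\mathbf{1}_{\overline{B_{3r/2}}}\|_{L^\infty}\,\|\partial^\alpha\phi_{\epsilon_N}^{*n_N}\|_{L^1}\,\|\partial^\beta\psi\|_{L^1} \leq C_0^{|\alpha|}\epsilon_N^{-|\alpha|}\,\|\partial^\beta\psi\|_{L^1},
$$
where $C_0:=\max_j\|\partial_j\phi\|_{L^1}$, using $\|\phi_{\epsilon_N}\|_{L^1}=1$ and $\|\partial_j\phi_{\epsilon_N}\|_{L^1}=\epsilon_N^{-1}\|\partial_j\phi\|_{L^1}$. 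Substituting $\epsilon_N^{-1}=8n_N/r$ and $n_N\leq C_1\lfloor N^{1/\sigma}\rfloor$ (for some $C_1=C_1(\tau,\sigma)$, valid for $N\geq 1$) produces a bound of the form $A_\beta B^{|\alpha|}\lfloor N^{1/\sigma}\rfloor^{|\alpha|}$, which becomes \eqref{ocenaNiz} after choosing $C_\beta$ large enough to absorb both $B$ and the prefactor $A_\beta=\|\partial^\beta\psi\|_{L^1}$.

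The main obstacle is the asymmetric form of \eqref{ocenaNiz}: the power of $\lfloor N^{1/\sigma}\rfloor$ depends on $|\alpha|$ only, with $\beta$ absorbed into an $N$-independent constant $C_\beta$. This dictates the split of the construction into (i) $n_N\sim(N/\tau)^{1/\sigma}$ shrinking mollifiers at scale $\epsilon_N\sim r/n_N$, which can absorb at most $|\alpha|\leq n_N$ derivatives and thereby produce exactly the factor $n_N^{|\alpha|}\leq C_1^{|\alpha|}\lfloor N^{1/\sigma}\rfloor^{|\alpha|}$, and (ii) one fixed-scale mollifier $\psi$ that absorbs any number of $\beta$-derivatives at the cost of an $N$-independent constant. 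Matching the two competing requirements (support in $B_{2r}$, value $1$ on $B_r$, and the combined two-layer derivative bookkeeping) is the only delicate point; once the scales are chosen as above, the rest is routine.
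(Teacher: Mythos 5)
Your construction is correct and is essentially the approach the paper itself relies on: the classical H\"ormander iterated-convolution construction (\cite[Theorems 1.3.5 and 1.4.2]{HermanderKnjiga}), with the number of mollifying factors adapted to $\lfloor (N/\t)^{1/\s}\rfloor$ and a fixed extra mollifier absorbing the $\beta$-derivatives, exactly as in the proof referred to in \cite{PTT-02}. The scale bookkeeping ($n_N\epsilon_N=r/8$, $n_N\leq C_1\lfloor N^{1/\s}\rfloor$) and the support/identity-on-$B_r(x_0)$ checks are all in order.
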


Next we show that (\ref{uslov3'})
implies local regularity related to  $\E_{\{\t,\s\}}(U)$.
For the opposite direction, if $u\in \E_{\{\t,\s\}}(\Omega)$
we need to observe $ \tilde \t, \s $-admissible sequences, where $ \tilde \t = \t^{\s/(\s -1)}.$
The precise statements are the following.

\begin{prop}
\label{dovoljanUslov}
Let $u \in \D'(U)$, $\t>0$, $\s>1$, $\Omega\subseteq U$ with the closure contained in $U$ and let $\{u_N\}_{N\in \N}$ be a bounded sequence in $\E'(U)$,
$u_N=u$ on $\Omega$ and  such that (\ref{uslov3'}) holds. Then $u\in \E_{\{\t,\s\}}(\Omega)$.
\end{prop}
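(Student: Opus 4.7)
The plan is to reconstruct pointwise derivative bounds for $u$ on $\Omega$ from the Fourier-side decay \eqref{uslov3'}, via Fourier inversion applied to the members of $\{u_N\}$ and an optimization of $N$ depending on the order $\alpha$. Fix a compact $K \subset\subset \Omega$. Since every $u_N \in \E'(U)$ has $\widehat u_N$ an entire function of exponential type, and since \eqref{uslov3'} gives faster-than-polynomial decay of $\widehat u_N$ once $N$ is chosen sufficiently large, $u_N$ is smooth on $\Omega$ and the identity $u = u_N$ on $\Omega$ together with Fourier inversion yields, for every $x\in K$ and every $N\in\N$,
\be
\partial^\alpha u(x) = \int_{\Rd}(2\pi i\xi)^\alpha \widehat u_N(\xi) e^{2\pi ix\xi}\,d\xi.
\ee
I would split this integral as $\int_{|\xi|\leq 1} + \int_{|\xi|>1}$; the low-frequency part is bounded uniformly in $N$ using the polynomial estimate $|\widehat u_N(\xi)|\leq C\langle\xi\rangle^M$ from \eqref{ogranicenostUN}, contributing at most $C_0 (2\pi)^{|\alpha|}$, which is trivially absorbed into the target bound.

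For the high-frequency part, \eqref{uslov3'} gives
\be
\Big|\int_{|\xi|>1}(2\pi i\xi)^\alpha\widehat u_N(\xi)e^{2\pi ix\xi}\,d\xi\Big|\leq A (2\pi)^{|\alpha|} h^N N!^{\t/\s}\int_{|\xi|>1}|\xi|^{|\alpha|-\lfloor N^{1/\s}\rfloor}\,d\xi,
\ee
and the freedom in $N$ is exploited by taking $N=N(\alpha)=\lceil(|\alpha|+d+1)^\s\rceil$, so that $\lfloor N^{1/\s}\rfloor\geq |\alpha|+d+1$ and the remaining frequency integral is majorized by a fixed dimensional constant. With this choice, Stirling's formula gives $N!^{\t/\s}\leq C_1^N N^{\t N/\s}$, whose leading term reduces to $((|\alpha|+d+1)^\s)^{\t(|\alpha|+d+1)^\s/\s} = M_{|\alpha|+d+1}^{\t,\s}$, up to a factor of the form $C^{|\alpha|^\s}$ coming from the rounding. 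Applying the property $\widetilde{(M.2)'}$ from Lemma \ref{osobineM_p_s} with $q=d+1$ one bounds $M_{|\alpha|+d+1}^{\t,\s}\leq C_{d+1}^{|\alpha|^\s}\cdot |\alpha|^{\t|\alpha|^\s}$, while the elementary inequality $(p+q)^\s\leq 2^{\s-1}(p^\s+q^\s)$ rewrites $h^N = h^{(|\alpha|+d+1)^\s}$ as a constant multiple of $(h^{2^{\s-1}})^{|\alpha|^\s}$. Collecting these pieces gives
\be
|\partial^\alpha u(x)|\leq A'\,\tilde h^{|\alpha|^\s}|\alpha|^{\t|\alpha|^\s},\quad x\in K,\;\alpha\in\N^d,
\ee
for some $A',\tilde h>0$ depending on $K$, which is exactly the defining Roumieu-type estimate for $u\in\E_{\{\t,\s\}}(\Omega)$.

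The main technical obstacle is the passage from the $N$-indexed factor $N!^{\t/\s}$ to the $|\alpha|$-indexed factor $|\alpha|^{\t|\alpha|^\s}$: the natural choice $N\sim|\alpha|^\s$ forces the appearance of $M_{|\alpha|+d+1}^{\t,\s}$ rather than $M_{|\alpha|}^{\t,\s}$. Because the sequence $M_p^{\t,\s}$ fails Komatsu's $(M.2)'$, the standard stability argument is not available, and one must rely instead on its weaker substitute $\widetilde{(M.2)'}$, which absorbs the additive shift by $d+1$ at the cost of the harmless factor $C_{d+1}^{|\alpha|^\s}$. It is precisely this interplay between the threshold exponent $\lfloor N^{1/\s}\rfloor$ in \eqref{uslov3'} and the weaker stability property of $M_p^{\t,\s}$ that makes the Fourier-side regularity condition \eqref{uslov3'} the natural one for the class $\E_{\{\t,\s\}}$.
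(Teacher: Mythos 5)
Your proof is correct and follows essentially the same route as the paper: Fourier inversion, a split into $|\xi|\leq 1$ (handled by the polynomial bound \eqref{ogranicenostUN}) and $|\xi|>1$ (handled by \eqref{uslov3'} with $N$ chosen of order $(|\alpha|+d+1)^{\s}$), and $\widetilde{(M.2)'}$ to absorb the shift by $d+1$. The only cosmetic difference is that the paper first performs the enumeration $N\to N^{\s}$ to rewrite \eqref{uslov3'} as \eqref{uslov4} and then takes $N=|\alpha|+d+1$, whereas you choose $N=\lceil(|\alpha|+d+1)^{\s}\rceil$ directly in the original indexing; these are the same choice.
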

\begin{proof}
After the enumeration $N\to N^{\s}$ and by Lemma \ref{osobineM_p_s},
condition (\ref{uslov3'}) is equivalent to

\begin{equation}
\label{uslov4}
|\widehat u_{N}(\xi)|\leq A\, \frac{k^{N^{\s}} N^{\t N^{\s}} }{|\xi|^{N}}, \quad N\in { \N},\,\xi\in \Rd\backslash\{0\}.
\end{equation} for some $A,k>0$.

By the Fourier inversion formula and the fact that $u_N=u$ in $\Omega$ we obtain
\begin{multline}
(h^{|\alpha|^{\s}}|\alpha|^{\t |\alpha|^{\s}})^{-1}|D^{\alpha}u(x)| \\[1ex]
= (h^{|\alpha|^{\s}}|\alpha|^{\t |\alpha|^{\s}})^{-1}\Big|\Big(
\int_{|\xi|\leq 1}+\int_{|\xi|> 1}\Big)\xi ^{\alpha}\widehat u_N(\xi)e^{2\pi i x\xi}d\xi\Big|\\[1ex]
\leq I_1+I_2,\quad N\in \N, \alpha \in \N^d, x\in \Omega,\nonumber
\end{multline}
where $h>0$ will be chosen later on. Using (\ref{ogranicenostUN}) we estimate $I_1$ by
\begin{multline}
I_1=(h^{|\alpha|^{\s}}|\alpha|^{\t |\alpha|^{\s}})^{-1}\Big|\int_{|\xi|\leq 1}\xi ^{\alpha}
\widehat u_N(\xi)e^{2\pi i x\xi}d\xi\Big| \\[1ex]
\leq  C(h^{|\alpha|^{\s}}|\alpha|^{\t |\alpha|^{\s}})^{-1}\int_{|\xi|\leq 1}\langle\xi\rangle^M d \xi.\nonumber
\end{multline}
If $h\geq 1$ we conclude that $I_1\leq C_1$ where $C_1$ does not depend on $\alpha$.
To estimate $I_2$, note that by (\ref{uslov4}) we have
\begin{eqnarray*}
I_2&=&(h^{|\alpha|^{\s}}|\alpha|^{\t |\alpha|^{\s}})^{-1}
\Big|\int_{|\xi|>1}\xi ^{\alpha}\widehat u_N(\xi)e^{2\pi i x\xi}d\xi\Big|  \\
&\leq& A (h^{|\alpha|^{\s}}|\alpha|^{\t |\alpha|^{\s}})^{-1} k^{N^{\s}}N^{\t N^{\s}}
\int_{|\xi|>1}|\xi|^{|\alpha|-N}d\xi \leq C (k^{2^{\s-1}}/h)^{|\alpha|^{\s}},
\end{eqnarray*}
where for the last inequality we chose $N=|\alpha|+d+1$, and use $\widetilde{(M.2)'}$ property of
$M_p^{\t,\s}$, $p\in \N$.
Now, for $h>k^{2^{\s-1}}$ we conclude that $I_2\leq C_2$, and $C_2$ does not depend on $\alpha$.
Hence, if we take $h>\max\{1,k^{2^{\s-1}}\}$, we conclude that
$u\in \E_{\{\t,\s\}}(\Omega)$, and the statement is proved.
\end{proof}

\begin{prop}
\label{potrebanUslov}
Let $\Omega\subseteq K\subset\subset U$, $\overline{\Omega} \subset K$,
$u\in \D'(U)$, and let $\{\chi_N\}_{N\in \N}$ be the ${\tilde\t},\s$-admissible sequence with respect to $K$, where ${\tilde\t}= \t^{\s/(\s -1)}$, $\t>0 $, $\s>1$ .
If $u\in \E_{\{\t,\s\}}(\Omega)$,
then $\{\chi_N u\}_{N\in \N}$ is bounded in $\E'(U)$,
${\chi}_N u=u$ on $\Omega$, and
\begin{equation}
\label{uslov2ro}
|\widehat{\chi_N u}(\xi)|\leq
A \frac{h^{N} N!^{{\tilde\t}^{-1/\s}/{\s}} }{|\xi|^{\lfloor ({N/{\tilde\t}})^{1/\s} \rfloor}}, \quad N\in { \N},\,\xi\in \Rd\backslash\{0\}.
\end{equation} That is, after enumeration $N\to {\tilde \t}N$, $\{\chi_N u\}_{N\in \N}$ satisfies \eqref{uslov3'}.
for some $A,h>0$.
\end{prop}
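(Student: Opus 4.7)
The plan is to verify the three conclusions in turn. The boundedness of $\{\chi_N u\}_{N\in\N}$ in $\E'(U)$ is immediate from Remark \ref{PaleyRemark}: taking $\alpha=0$ in the admissibility estimate \eqref{ocenaNiz} shows that $\{\chi_N\}$ is bounded in $C^{\infty}(U)$, whence $\{\chi_N u\}$ is bounded in $\E'(U)$. The identity $\chi_N u=u$ on $\Omega$ is exactly property (a) of Definition \ref{definicijaNiza}. All substance is thus concentrated in the Fourier decay bound \eqref{uslov2ro}.

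For this bound, I would begin from the standard integration-by-parts identity $(2\pi\xi)^{\alpha}\widehat{\chi_N u}(\xi)=i^{|\alpha|}\widehat{\partial^{\alpha}(\chi_N u)}(\xi)$, which combined with $|\widehat{v}(\xi)|\leq |K|\,\|v\|_{L^\infty}$ for $v$ supported in $K$ yields
$$
|\xi|^{|\alpha|}\,|\widehat{\chi_N u}(\xi)|\leq C_K\,\sup_{x\in K}|\partial^{\alpha}(\chi_N u)(x)|.
$$
Applying the Leibniz rule,
$$
\partial^{\alpha}(\chi_N u)=\sum_{\beta\leq\alpha}\binom{\alpha}{\beta}\,\partial^{\alpha-\beta}\chi_N\cdot\partial^{\beta}u,
$$
I would insert the admissibility bound \eqref{ocenaNiz} for $\partial^{\alpha-\beta}\chi_N$ and the $\E_{\{\t,\s\}}$ estimate $|\partial^{\beta}u(x)|\leq A h_1^{|\beta|^{\s}}|\beta|^{\t|\beta|^{\s}}$ valid on $\supp\chi_N$ (this being the operative reading of $u\in\E_{\{\t,\s\}}(\Omega)$: $u$ is $\E_{\{\t,\s\}}$ on a compact neighborhood of $\supp\chi_N$).

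The crux is to choose $|\alpha|=M:=\lfloor(N/\tilde\t)^{1/\s}\rfloor$, so that $M^{\s}\leq N/\tilde\t$ and the admissibility bound applies uniformly in $\beta\leq\alpha$. The dominant factor is the ultradifferentiable one at $\beta=\alpha$, giving $|\alpha|^{\t|\alpha|^{\s}}\leq (N/\tilde\t)^{\t N/(\s\tilde\t)}$. By Stirling, this is dominated by $h^{N} N!^{\t/(\s\tilde\t)}$, and the exponent $\t/(\s\tilde\t)$ equals the target $\tilde\t^{-1/\s}/\s$ \emph{precisely} when $\tilde\t=\t^{\s/(\s-1)}$, which pins down the value in the statement. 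The remaining admissibility constants $C_\beta^{|\alpha-\beta|+1}$, the Leibniz binomials, the $2^{|\alpha|}$ terms in the sum, and the factors $\lfloor N^{1/\s}\rfloor^{|\alpha-\beta|}$ are absorbed into $h^{N}$ by using $\widetilde{(M.2)'}$ from Lemma \ref{osobineM_p_s} to collapse the mixed terms $|\beta|^{\t|\beta|^{\s}}\cdot\lfloor N^{1/\s}\rfloor^{|\alpha-\beta|}$ into a constant multiple of $|\alpha|^{\t|\alpha|^{\s}}$.

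The main obstacle is exactly this combinatorial bookkeeping: one must check that the product of the Leibniz binomial, the admissibility factor $\sim N^{|\alpha-\beta|/\s}$, and the ultradifferentiable factor $|\beta|^{\t|\beta|^{\s}}$ stays within the budget $h^{N} N!^{\tilde\t^{-1/\s}/\s}$ uniformly in $\beta\leq\alpha$, not just at the endpoints $\beta=0$ and $\beta=\alpha$. Once \eqref{uslov2ro} is obtained, the statement that the enumeration $N\to\tilde\t N$ transforms the bound into the form \eqref{uslov3'} is a direct substitution, exactly analogous to the passage \eqref{uslov3'}$\to$\eqref{uslov3''} sketched at the beginning of Section \ref{sec-2}.
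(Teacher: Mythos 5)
Your proposal is correct and follows essentially the same route as the paper: Leibniz expansion of $\partial^{\alpha}(\chi_N u)$ combining the admissibility bound \eqref{ocenaNiz} with the $\E_{\{\t,\s\}}$ estimate for $u$, the choice $|\alpha|=\lfloor(N/\tilde\t)^{1/\s}\rfloor$, absorption of the subexponential factors $\lfloor N^{1/\s}\rfloor^{|\alpha-\beta|}\leq N^{O(N^{1/\s})}\leq C^{N}$ into $h^{N}$, Stirling, and the final enumeration $N\to\tilde\t N$. The only cosmetic difference is that the paper handles the mixed Leibniz terms by crude monotone domination ($|\beta|^{\t|\beta|^{\s}}\leq\lfloor N^{1/\s}\rfloor^{\t N/\tilde\t}$ for all $\beta\leq\alpha$) rather than by invoking $\widetilde{(M.2)'}$, which is not actually needed here.
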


\begin{proof}
Put $u_N=\chi_N u$, $N\in \N$. By the Remark \ref{PaleyRemark},
$\{u_N\}_{N\in \N}$ is bounded in $\E'(U)$. Note also that $u_N=u$ on $\Omega$ and $\supp u_N\subseteq K$.

Since $u\in \E_{\{\t,\s\}}(\Omega)$, from (\ref{ocenaNiz}) for
$|\alpha|\leq \lf (N/{\tilde \t})^{1/\s} \rf$, $x\in \Omega$, and for some $k>1$
we obtain
\begin{eqnarray}
\label{u_Nkonstrukcija1}
|D^{\alpha}u_N(x)| &\leq& \sum_{\beta\leq\alpha}{\alpha \choose \beta}|D^{\alpha-\beta}\chi_N(x)||D^{\beta}u(x)|\nonumber\\
&\leq& || u ||_{\E_{\t,\s,k}(\Omega)} \sum_{\beta\leq\alpha}{\alpha \choose \beta} A^{|\alpha-\beta|+1} {\lf N^{1/\s}\rf}^{|\alpha-\beta|}k^{|\beta|^{\s}}|\beta|^{\t |\beta|^{\s}}\nonumber\\
&\leq& A || u ||_{\E_{\t,\s,k}(\Omega)} (2A)^{\lf (N/{\tilde \t})^{1/\s} \rf}\lf N^{1/\s} \rf^{\lf (N/{\tilde \t})^{1/\s} \rf}k^{N/{\tilde \t}}\lf N^{1/\s} \rf^{\frac{\t N}{{\tilde \t}}}\nonumber\\
&\leq& A || u ||_{\E_{\t,\s,k}(\Omega)} B^{N} N^{{\frac{1}{\s} (\frac{1}{\t})^{1/(\s-1)}}N^{1/\s}}N^{{\frac{1}{\s} (\frac{1}{\t})^{1/(\s-1)}}N}
\end{eqnarray} for some $B>0$, where for the last inequality we have used that $ \tilde \t = \t^{\s/(\s -1)}$.

Next we note that there exists $c>0$, such that
$$
N^{1/\s}\ln N\leq c N^{1/\s}N^{1-1/\s}=c N,
$$
wherefrom $N^{{\frac{1}{\s} (\frac{1}{{\t}})^{1/(\s-1)}}N^{1/\s}}\leq C^{N}$
for some $C>1$ (which depends on $\t$ and $\s$). Hence (\ref{u_Nkonstrukcija1}) can be estimated by
\begin{equation}
\label{u_Nkonstrukcija2}
|D^{\alpha}u_N(x)|\leq   A || u ||_{\E_{\t,\s,k}(\Omega)} h^{N} N^{{\frac{1}{\s} (\frac{1}{{\t}})^{1/(\s-1)}}N},
\end{equation}
for some $h>0$.
Applying the Fourier transform to (\ref{u_Nkonstrukcija2}) for $|\alpha|=\lf (N/{\tilde\tau})^{1/\s} \rf$ we obtain
\begin{equation}
\label{u_nKonstrukcija3}
|\widehat u_N(\xi)|\leq   A || u ||_{\E_{\t,\s,k}(\Omega)}  \frac{h^{N} N^{{\frac{1}{\s} (\frac{1}{\t})^{1/(\s-1)}}N}}{|\xi|^{\lfloor (N/{\tilde \t})^{1/\s} \rfloor}}, \quad N\in { \N},\,\xi\in \Rd\backslash\{0\}.
\end{equation}
Finally, after the enumeration $N\to {\tilde\tau} N$, we note that (\ref{u_nKonstrukcija3})
and Stirling's formula imply (\ref{uslov2ro}), and  the proposition is proved.
\end{proof}

\begin{rem} \label{RemarkSlucajS=1}
Sequence of functions $\{\varphi_N\}_{N\in \N}$
"analytic up to the order $N$" is
used to extended results from  \cite{HermanderKnjiga}
to Gevrey type ultradistributions, cf. \cite[Proposition 1.4.10, Corollary 1.4.11]{Rodino}.
When $\t>0$,  $\s>1$ and $\beta=0$ in (\ref{ocenaNiz}) we obtain
$$
\sup_{x\in K}|\partial^{\alpha} \chi_N (x)|
\leq A^{|\alpha|+1}\frac{\lfloor N^{1/\s}\rfloor ^{|\alpha|}}{|\alpha|^{\frac{1}{\s}|\alpha|}}|\alpha|^{\frac{1}{\s}|\alpha|}
$$
\begin{equation}
\label{potrbanuslovOcena}
\leq {A^{|\alpha|+1}}\sup_{r>0}\frac{N^{r/\s}}{r^{r/\s}}|\alpha|^{\frac{1}{\s}|\alpha|}\nonumber
= A^{|\alpha|+1} e^{\frac{1}{e\s}N} |\alpha|^{\frac{1}{\s}|\alpha|}, \quad |\alpha|\leq
\lfloor (N/\t)^{1/\s}  \rfloor,
\end{equation}
so  $\chi_N$ might be called
"quasi-analytic up to the order $\lfloor (N/\t)^{1/\s}  \rfloor$".
When $\s\to 1^+$
the order of quasi-analyticity of $\chi_N$ tends to infinity (for fixed $N\in \N$)
for $0<\t<1$, while for $\t>1$ it tends to zero.
Therefore the study of the "critical" behavior when
$\s\to 1^{+}$ is possible only if $\t$ depends on $\s$.

In particular, when $\s=1$ and $\t\not=1$, the proof of Proposition \ref{potrebanUslov} fails, while
for $\t=\s=1$ Proposition \ref{potrebanUslov} coincides with necessity part of \cite[Proposition 8.4.2.]{HermanderKnjiga}.
\end{rem}

\section{Wave-front sets with respect to $\E_{\t,\s}$} \label{sec-3}

In this section we define wave front sets which detect singularities that are
"stronger" then classical $C^{\infty}$ singularities and "weaker" then Gevrey type singularities,
which is done within the framework of the regularity classes
${\E}_{\tau, \s}(U)$.

\begin{de}
\label{Wf_t_s}
Let $\t>0$ and $\s>1$, $u \in \D'(U)$, and $(x_0,\xi_0)\in U\times\Rd\backslash\{0\}$.
Then $(x_0,\xi_0)\not \in {\WF}_{\{\t,\s\}}(u)$ (resp. ${\WF}_{(\t,\s)}(u)$) if there exists
open neighborhood $\Omega \subset U$ of $x_0$,
a conic neighborhood $\Gamma$ of $\xi_0$,
and a bounded sequence
$\{u_N\}_{N\in \N}$ in $\E'(U)$ such that $u_N=u$ on $\Omega$ and
\eqref{uslov3'} holds
for some constants $A,h>0$ (resp. for every $h>0$ there exists $A>0$).
\end{de}

Note that ${\WF}_{\{\t,\s\}}(u)$, $u\in \D'(U)$,
is a closed subset of $U\times\Rd\backslash\{0\}$ and for $\t>0$ and $\s>1$ we have
$$
{\WF}_{\{\t,\s\}}(u)\subseteq {\WF}_{\s}(u) \subseteq {\WF}_{\{1,1\}}(u)={\WF}_A (u),
$$
where $ {\WF}_{\s}(u)  $ is the Gevrey wave-front set.
If $ 0<\t<1$ and $\s = 1$ then $ {\WF}_A (u) \subseteq {\WF}_{\{\t,1\}}(u)$.

\par

Next we prove an important fact on microlocal regularity.

\begin{lema}
\label{Singsuplema}
Let $\t>0$, $\s>1$,  $u\in\D'(U)$, $K\subset\subset U$,
and let $\{\chi_N\}_{N\in \N}$ be a ${\tilde \t}, \s$-admissible sequence with respect to $K$ with ${\tilde\t}= \t^{\s/(\s -1)}$.
Then $\{\chi_N u\}_{N\in \N}$ is a bounded sequence in $\E'(U)$,
and if ${\WF}_{\{\t,\s\}}(u) \cap (K\times F)=\emptyset$,
where $F$ is a closed cone, then there exist $A,h>0$ such that
\be
\label{SingsuplemaUslov}
|\widehat{\chi_N u}(\xi)|\leq
A \frac{h^{N} N!^{{\tilde\t}^{-1/\s}/{\s}} }{|\xi|^{\lfloor ({N/{\tilde\t}})^{1/\s} \rfloor}},
\quad N\in {\N}\,,\xi\in F\,.
\ee
\end{lema}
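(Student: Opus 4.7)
The plan is to prove the lemma by a compactness reduction to local WF-type bounds, followed by a patching and convolution estimate on the Fourier side. The boundedness of $\{\chi_N u\}_N$ in $\E'(U)$ is immediate from Remark~\ref{PaleyRemark}, since the admissibility bound \eqref{ocenaNiz} at $\alpha = 0$ shows that $\{\chi_N\}_N$ is bounded in $C^\infty(U)$. For \eqref{SingsuplemaUslov}, I would first pre-patch direction by direction: fix $\xi_0 \in F \cap S^{d-1}$, and use the hypothesis $\WF_{\{\t,\s\}}(u) \cap (K \times F) = \emptyset$ together with Definition~\ref{Wf_t_s} to obtain, for each $y \in K$, a neighborhood $\Omega_{y,\xi_0}$ of $y$, a conic neighborhood $\Gamma_{y,\xi_0}$ of $\xi_0$, and a bounded sequence $\{v_N^{(y,\xi_0)}\} \subset \E'(U)$ equal to $u$ on $\Omega_{y,\xi_0}$ and satisfying \eqref{uslov3'} on $\Gamma_{y,\xi_0}$. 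Extract a finite subcover $\{\Omega_{y_k,\xi_0}\}_k$ of $K$, set $\Gamma_{\xi_0} := \bigcap_k \Gamma_{y_k,\xi_0}$, and patch via a partition of unity $\{\psi_k\}$ with $\supp \psi_k \subset \Omega_{y_k,\xi_0}$:
\begin{equation*}
w_N^{(\xi_0)} := \sum_k \psi_k v_N^{(y_k,\xi_0)}.
\end{equation*}
Then $\{w_N^{(\xi_0)}\}$ is bounded in $\E'(U)$ and equals $u$ near $K$, and a standard Fourier convolution estimate (splitting $\eta \in \Gamma_{y_k,\xi_0}$ from its complement and using the Schwartz decay of $\widehat{\psi_k}$) verifies that $\widehat{w_N^{(\xi_0)}}$ still satisfies \eqref{uslov3'} on a strictly smaller subcone $\Gamma_{\xi_0}' \Subset \Gamma_{\xi_0}$.

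Next, I would cover $F \cap S^{d-1}$ by finitely many subcones $\Gamma_{\xi_l}'$ from the previous step, and pick closed $F_l \Subset \Gamma_{\xi_l}'$ with $F \subseteq \bigcup_l F_l$. For $\xi \in F$ select $l$ with $\xi \in F_l$. Since $w_N^{(\xi_l)} = u$ near $K \supset \supp \chi_N$, we have $\chi_N u = \chi_N w_N^{(\xi_l)}$, so that
\begin{equation*}
\widehat{\chi_N u}(\xi) = \bigl(\widehat{\chi_N} * \widehat{w_N^{(\xi_l)}}\bigr)(\xi) = \int_{\Gamma_{\xi_l}'} \widehat{\chi_N}(\xi - \eta)\, \widehat{w_N^{(\xi_l)}}(\eta)\, d\eta + \int_{\Rd \setminus \Gamma_{\xi_l}'} \widehat{\chi_N}(\xi - \eta)\, \widehat{w_N^{(\xi_l)}}(\eta)\, d\eta.
\end{equation*}
On $\Gamma_{\xi_l}'$ I would use the patched WF bound \eqref{uslov3'}. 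On the complement, the strict cone inclusion $F_l \Subset \Gamma_{\xi_l}'$ yields $|\xi - \eta| \geq c(|\xi| + |\eta|)$, which combined with the Paley-Wiener polynomial growth \eqref{ogranicenostUN} of $\widehat{w_N^{(\xi_l)}}$ and the rapid decay of $\widehat{\chi_N}$ from the admissibility bound \eqref{chiNPaley}, applied at the order $|\alpha| = \lfloor (N/\tilde\t)^{1/\s} \rfloor$, produces the desired $|\xi|^{-\lfloor (N/\tilde\t)^{1/\s} \rfloor}$ decay. Finally, the enumeration $N \to \tilde\t N$ (via Stirling and Lemma~\ref{osobineM_p_s}), mimicking the passage from \eqref{uslov3'} to \eqref{uslov3''} and then to the form appearing in \eqref{u_nKonstrukcija3}--\eqref{uslov2ro}, converts $N!^{\t/\s}/|\xi|^{\lfloor N^{1/\s} \rfloor}$ into the target form $N!^{\tilde\t^{-1/\s}/\s}/|\xi|^{\lfloor (N/\tilde\t)^{1/\s} \rfloor}$ required in \eqref{SingsuplemaUslov}.

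The main obstacle will be the pre-patching step: one must verify that the sum $w_N^{(\xi_0)} = \sum_k \psi_k v_N^{(y_k,\xi_0)}$ inherits the WF-type decay on a subcone of $\Gamma_{\xi_0}$, even though the individual sequences $v_N^{(y_k,\xi_0)}$ satisfy \eqref{uslov3'} only on their own cones $\Gamma_{y_k,\xi_0} \supseteq \Gamma_{\xi_0}$, and the multiplication by $\psi_k$ introduces a convolution with $\widehat{\psi_k}$ on the Fourier side that must not spoil the rate. This amounts essentially to the same split-integral argument as in the main estimate but at a smaller scale, and once settled it reduces the proof of \eqref{SingsuplemaUslov} to a microlocal extension of the calculation underlying Proposition~\ref{potrebanUslov}.
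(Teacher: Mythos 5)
Your estimate engine is the same as the paper's: split the Fourier convolution into a region where the microlocal decay \eqref{uslov3'} of the localized distribution applies and a region where the admissibility bound \eqref{chiNPaley} at the $N$-dependent order $|\alpha|=\lfloor (N/\tilde\t)^{1/\s}\rfloor$ applies, then pass to \eqref{SingsuplemaUslov} by the enumeration $N\to\tilde\t N$; the boundedness of $\{\chi_N u\}$ is handled identically via Remark \ref{PaleyRemark}. Where you genuinely differ is the compactness step. The paper keeps the given $\chi_N$ and decomposes the \emph{cutoff}, $\chi_N=\sum_k\chi_N\chi_{N,k}$ with $\chi_{N,k}=\phi_N*\chi_k$ supported in small balls and again satisfying \eqref{ocenaNiz}, so that each piece meets a single local sequence $u_N$; you instead glue the local \emph{distributions} into one sequence $w_N^{(\xi_0)}=\sum_k\psi_k v_N^{(y_k,\xi_0)}$ per direction and then convolve once with $\widehat{\chi_N}$.

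The gap is in that pre-patching step. For $\widehat{\psi_k v_N^{(y_k,\xi_0)}}=\widehat{\psi_k}*\widehat{v_N^{(y_k,\xi_0)}}$ to retain a bound of type \eqref{uslov3'} on a subcone, the contribution of the region $|\eta|\geq\varepsilon|\xi|$, where only the Paley--Wiener bound \eqref{ogranicenostUN} controls $\widehat{v_N^{(y_k,\xi_0)}}$, must come out as $A h^N N!^{\t/\s}|\xi|^{-\lfloor N^{1/\s}\rfloor}$. This forces you to invoke the decay $|\widehat{\psi_k}(\eta)|\leq C_M\langle\eta\rangle^{-M}$ at an order $M\sim\lfloor N^{1/\s}\rfloor+d+1$ plus the fixed power from \eqref{ogranicenostUN}, i.e.\ $M\to\infty$ with $N$. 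For an arbitrary $C_c^\infty$ partition of unity the constants $C_M$, governed by $\sup_{|\alpha|\leq M}\|D^\alpha\psi_k\|_{L^1}$, grow in a completely uncontrolled way in $M$: ``Schwartz decay'' gives no estimate of the form $C_M\leq A^{M+1}M!^{t}$, which is what is needed to absorb $C_M$ into $h^N N!^{\t/\s}$ for $M\sim N^{1/\s}$. So the step as written fails. The repair is precisely the device the paper uses: the cutoffs must themselves be quasi-analytic up to order $\lfloor N^{1/\s}\rfloor$ (take $\chi_{N,k}=\phi_N*\chi_k$ with $\sup|D^\alpha\phi_N|\leq C^{|\alpha|}\lfloor N^{1/\s}\rfloor^{|\alpha|}$ for $|\alpha|\leq\lfloor(N/\tilde\t)^{1/\s}\rfloor$), or at least compactly supported Gevrey functions. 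A secondary point: your two-way split over $\eta\in\Gamma_{\xi_l}'$ versus its complement does not cover the subregion $\eta\in\Gamma_{\xi_l}'$ with $|\eta|\ll|\xi|$, where the bound $|\widehat{w_N^{(\xi_l)}}(\eta)|\lesssim h^N N!^{\t/\s}|\eta|^{-\lfloor N^{1/\s}\rfloor}$ degenerates; you need the paper's split by $|\eta|<\varepsilon|\xi|$ versus $|\eta|\geq\varepsilon|\xi|$ (with the cone condition imposed on the argument of the localized distribution), or a three-way split.
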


\begin{proof}
Let $(x_0,\xi_0)\in K\times F$, and set $r_0:=r_{{x_0,\xi_0}}>0$.
Furthermore, let $\{\chi_N \}_{N\in \N}$ be the ${\tilde \t}, \s$-admissible sequence with respect to
$\overline{B_{r_0}(x_0)}$, $\overline{B_{r_0}(x_0)}\subseteq \Omega\subseteq K$.
Boundedness of $\{\chi_Nu \}_{N\in \N}$ follows by Remark \ref{PaleyRemark}.

Since $(x_0,\xi_0)\not \in {\WF}_{\{\t,\s\}}(u)$
we choose $u_N$, $\Omega$ and $\Gamma$ as in Definition \ref{Wf_t_s} so that
\begin{equation}
\label{uslov1-sigma}
|\widehat u_N(\xi)|\leq A\, \frac{h^{N} N!^{\t/\s} }{|\xi|^{\lfloor N^{1/\s} \rfloor}},
\quad N\in { \N},\,\xi\in \Gamma,
\end{equation}
for some $A,h>0$. Recall,  the condition (\ref{uslov1-sigma})  is equivalent to
\begin{equation}
\label{uslov2-sigma}
|\widehat u_N(\xi)|\leq A\, \frac{h^{N} N!^{{\tilde \t}^{-1/\s}/\s} }{|\xi|^{\lfloor (N/{\tilde \t})^{1/\s} \rfloor}},
\quad N\in { \N},\,\xi\in \Gamma,
\end{equation}
after applying Stirling's formula and  enumeration $N\to N/{\tilde \t}$.

Let $\Gamma_0$ be an open conical neighborhood of $\xi_0$ with the closure contained in $\Gamma$
and choose $\varepsilon >0$ such that $\xi-\eta \in \Gamma$ when $\xi\in \Gamma_0$ and $|\eta|<\varepsilon |\xi|$.
Then, since $\chi_N u=\chi_N u_N$, we write
$$
\widehat {\chi_N u}(\xi)=\Big(\int_{|\eta|<\varepsilon |\xi|} +\int_{|\eta|\geq\varepsilon |\xi|}\Big)
\widehat \chi_N (\eta)\widehat u_N(\xi-\eta)\,d\eta =I_1+I_2\,,\quad \xi\in \Gamma_0, N\in \N\,.
$$
To estimate $I_1$, note that for $|\eta|<\varepsilon |\xi|$ we have
$$|\xi-\eta|\geq |\xi|-|\eta|> (1-\varepsilon)|\xi|.$$
Thus, by using (\ref{chiNPaley}) for $\alpha=0$ and $|\beta|=d+1$ and (\ref{uslov2-sigma}), we obtain
\begin{multline}
|I_1|=\Big| \int_{|\eta|<\varepsilon |\xi|} \widehat \chi_N (\eta)\widehat u_N(\xi-\eta)\,d\eta \Big| \\[1ex]
\leq \int_{|\eta|<\varepsilon |\xi|} |{\widehat \chi_N} (\eta)| A \frac{h^{N} N!^{{\tilde \t}^{-1/\s}/\s} }{|\xi-\eta|^{\lfloor (N/{\tilde \t})^{1/\s} \rfloor}}d\eta
\\[1ex]
\leq A \frac{h^{N}N!^{{\tilde \t}^{-1/\s}/\s} }{((1-\varepsilon)|\xi|)^{\lfloor (N/{\tilde \t})^{1/\s} \rfloor}}\int_{{\bf R}^d}\langle \eta \rangle^{-d-1} d\eta
\leq A_1 \frac{h_1^{N}N!^{{\tilde \t}^{-1/\s}/\s}}{|\xi|^{\lfloor (N/{\tilde \t})^{1/\s} \rfloor}},\quad \xi\in \Gamma_0, N\in \N.\nonumber
\end{multline}
To estimate $I_2$, note that for $|\eta|\geq \varepsilon |\xi|$ we have
$$
|\xi-\eta|\leq |\xi|+|\eta|\leq (1+1/\varepsilon)|\eta|,
$$
and thus, using (\ref{chiNPaley}) for $|\alpha|=\lfloor (N/{\tilde \t})^{1/\s} \rfloor$, together with (\ref{ogranicenostUN}) and
$$
{\lfloor N^{1/\s} \rfloor}^{\lfloor (N/\t)^{1/\s} \rfloor}
\leq{ N }^{{1/\s} (1/\t)^{1/\s} N }\leq C^{N}{ N! }^{\t^{-1/\s}/\s},
$$
for every $\beta \in {\mathbf N}^d$ and some $M>0$ we obtain
\begin{eqnarray*}
|I_2|&=&\Big| \int_{|\eta|\geq \varepsilon |\xi|} \widehat \chi_N (\eta)\widehat u_N(\xi-\eta)\,d\eta \Big|\\
&\leq&\frac{A_{\beta}^{\lfloor (N/{\tilde \t})^{1/\s} \rfloor+1} {\lfloor N^{1/\s} \rfloor}^{\lfloor (N/{\tilde \t})^{1/\s} \rfloor}}{(\varepsilon |\xi|)^{\lfloor (N/{\tilde \t})^{1/\s} \rfloor}}\int_{|\eta|\geq \varepsilon |\xi|} \langle\eta\rangle^{-|\beta|} C \langle\xi-\eta\rangle^{M} \,d\eta\\
&\leq& \frac{A_{\beta}^{N +1} {\lfloor N^{1/\s} \rfloor}^{\lfloor (N/{\tilde \t})^{1/\s} \rfloor}}{(\varepsilon |\xi|)^{\lfloor (N/{\tilde \t})^{1/\s} \rfloor}}
\int_{{\bf R}^d}\langle\eta\rangle^{-|\beta|} \langle(1+1/\varepsilon)\eta\rangle^{M}  ,d\eta \\
&\leq&  \frac{A'^{N+1} N!^{{\tilde \t}^{-1/\s}/\s} }{|\xi|^{\lfloor (N/{\tilde \t})^{1/\s} \rfloor}},\quad \xi\in \Gamma_0,
\end{eqnarray*}
for some $A'>0$, where we have chosen $|\beta|=M+d+1$.

Thus, the statement follows for $(x,\xi)\in B_{r_0}(x_0)\times \Gamma_0$.

In order to extend the result to $K\times F$ we use the same idea
as in the proof of \cite[Lemma 8.4.4]{HermanderKnjiga}.
Since the intersection of $F$ with the unit sphere is a compact set,
there exists a finite number of balls $B_{r_{{x_0,\xi_j}}}(x_0)$,
and cones $\Gamma_j$ that covers $F$, $j\leq n$, $n\in \Z$, and note that (\ref{SingsuplemaUslov}) remains valid if $\{\chi_N\}_{N\in \N}$ is chosen so that $\dss \supp\chi_N \subseteq B_{r_{x_0}}:=\bigcap_{j=1}^{n}B_{r_{x_0,\xi_j}}(x_0)$.

Moreover, since $K$ is  compact set, it is covered by a finite number of balls $B_{r_{x_k}}$, $k\leq n$, $n\in \Z$.
By \cite[Lemma 5.1.]{Komatsuultra1} there exist
non-negative functions $\chi_k\in C_0^{\infty}(B_{r_{x_k}/2})$, $k\leq n$, such that $\suml_{k=1}^n \chi_k=1$
on a neighborhood of $K$. Next, for every $N\in \N$ we choose a non-negative function
$\phi_N\in C_0^{\infty}(B_{r_{x_k}/2})$ such that
$\int \phi_N (x) =1$
and
$$
%\label{defChiNBezalfa}
\sup_{x\in K}
|D^{\alpha}{\phi_{N}}(x)|\leq C^{|\alpha|} \lfloor N^{1/\s}\rfloor ^{|\alpha|},
$$
for $|\alpha|\leq \lfloor (N/\tilde\t)^{1/\s}\rfloor$, where the constant $C>0$ depends on $\t$ and $\s$,
cf. \cite[Theorem 1.4.2.]{HermanderKnjiga}.
Now, for $\chi_{N,k}=\phi_N * \chi_k$, we have
$ \suml_{k=1}^n\chi_{N,k}=1$ in a neighborhood of $K$, and each $\chi_{N,k}$, $1\leq k\leq n$,
satisfies (\ref{ocenaNiz}).

To conclude the proof we note that if $\{\chi_N\}_{N\in \N}$ is a ${\tilde \t}, \s$-admissible sequence
with respect to $K$, then $\chi_N \chi_{N,k} $ also satisfies estimate of type (\ref{ocenaNiz}), for $1\leq k\leq n$. This follows by simple application of Leibniz rule.
Thus, (\ref{SingsuplemaUslov}) holds if we replace $\chi_N$ by $\chi_N \chi_{N,k}$. Since $\suml_{k=1}^n\chi_N\chi_{N,k}=\chi_N$, the result follows.
\end{proof}

Next we give a short comment on $\WF_{(\t,\s)}(u)$, $u\in \D'(U)$.
From our analysis it follows that the regularity related to the complement of $\WF_{\{\t,\s\}}$
is described by the (microlocal) regularity of $\E_{\{\t,\s\}}$.
Therefore the following Corollary follows  from the embeddings given by \eqref{RoumieuBeurling}.

\begin{cor}
\label{PosledicaPresekUnija}
Let $u\in \D'(U)$, $t>1$. Then for $0<\t<\rho $ and $\s>1$ it holds
$$\WF(u)\subseteq \WF_{\{\rho,\s\}}(u)\subseteq \WF_{(\rho,\s)}(u)\subseteq \WF_{\{\t,\s\}}(u) \subseteq\bigcap_{t>1}\WF_t(u)\subseteq \WF_A(u)\,,$$ where $\WF_t$ and $\WF_A$ are Gevrey and analytic wave front sets, respectively.
\end{cor}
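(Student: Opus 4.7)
The corollary amounts to five successive inclusions between closed conic subsets of $U \times (\Rd \setminus \{0\})$, and my plan is to prove each one by contraposition: starting from a bounded sequence $\{u_N\} \subset \E'(U)$ certifying that $(x_0,\xi_0)$ lies outside the larger set, I construct (or directly re-use) a bounded sequence certifying that $(x_0,\xi_0)$ also lies outside the smaller set. Two mechanisms drive the whole argument: (i) the Roumieu/Beurling embeddings of Proposition \ref{detectposition} and \eqref{RoumieuBeurling}; (ii) the enumeration procedure introduced at the start of Section \ref{sec-2}, which passes from $\{u_N\}$ to a subsequence $\{u_{a_N}\}$ with $a_N \nearrow \infty$, preserving boundedness in $\E'(U)$ while rescaling the exponents in the decay estimate in a controlled way.

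The middle chain $\WF_{\{\rho,\s\}} \subseteq \WF_{(\rho,\s)} \subseteq \WF_{\{\t,\s\}}$ is essentially formal. The first inclusion is immediate, because the Beurling clause ``for every $h > 0$ there exists $A > 0$'' is strictly stronger than the Roumieu clause ``for some $h, A > 0$''. For the second, given a $\{\t,\s\}$-bounded sequence with parameters $A, h$ and given $\t < \rho$, the factor $N!^{(\t - \rho)/\s}$ decays faster than any geometric sequence, so for any prescribed $h' > 0$ the supremum $\sup_N (h/h')^N N!^{(\t - \rho)/\s}$ is finite; the same sequence then satisfies the Beurling $(\rho, \s)$ estimate with $h'$ and a modified constant $A'$, yielding $(x_0,\xi_0) \notin \WF_{(\rho,\s)}(u)$.

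The outer inclusions proceed by enumeration. For $\WF(u) \subseteq \WF_{\{\rho,\s\}}(u)$, starting from a bounded sequence $\{u_N\}$ satisfying the $\{\rho,\s\}$ estimate, I set $v_M := u_{\lceil (M+1)^\s \rceil}$; then $\lfloor (\lceil (M+1)^\s \rceil)^{1/\s} \rfloor \geq M$, the sequence $\{v_M\}$ remains bounded in $\E'(U)$, and the resulting estimate has the form $|\widehat v_M(\xi)| \leq C_M (1 + |\xi|)^{-M}$, i.e.\ the bounded-sequence characterization of the complement of the $C^\infty$ wave-front set. For $\WF_{\{\t,\s\}}(u) \subseteq \bigcap_{t > 1} \WF_t(u)$, assume $(x_0,\xi_0) \notin \WF_{t_0}(u)$ for some $t_0 > 1$, via bounded $\{u_N\}$ with $|\widehat u_N(\xi)| \leq A h^N N!^{t_0}/|\xi|^N$, and set $v_N := u_{\lfloor N^{1/\s}\rfloor}$; this matches the required denominator exponent, and by Stirling the numerator is controlled by $C_1^N N^{(t_0/\s) N^{1/\s}}$, which for any $h' > 0$ is absorbed into $h'^N N^{(\t/\s) N}$ because $N \log N \gg N^{1/\s} \log N$ when $\s > 1$. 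The final inclusion $\bigcap_{t>1} \WF_t(u) \subseteq \WF_A(u)$ is immediate, since a sequence certifying $(x_0,\xi_0) \notin \WF_A(u)$ (the case $t = 1$ with $N!^1 = N!$) automatically satisfies the Gevrey estimate for every $t > 1$ via $N! \leq N!^t$.

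The main obstacle lies in the fourth inclusion, where the enumeration must rescale both the denominator exponent and the numerator growth compatibly. This is feasible precisely because $\s > 1$: the asymptotic dominance $N^{(\t/\s) N} \gg N^{(t_0/\s) N^{1/\s}}$ uses $\s > 1$ in an essential way, consistent with Remark \ref{RemarkSlucajS=1} on the critical behavior as $\s \to 1^+$.
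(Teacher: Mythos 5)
Your proposal is correct and follows essentially the route the paper intends: the paper disposes of the corollary in one line by appealing to the embeddings \eqref{RoumieuBeurling} together with the correspondence between the complement of $\WF_{\{\t,\s\}}$ and $\E_{\{\t,\s\}}$-regularity, and your estimate-level verification (comparing the factors $h^N N!^{\t/\s}$ for different $\t$, and using the enumeration $N\to N^{1/\s}$, $N \to (M+1)^{\s}$ to align the exponents $\lfloor N^{1/\s}\rfloor$ with the $C^\infty$, Gevrey and analytic conditions) is precisely what those embeddings amount to for the defining sequences $\{u_N\}$. The only point you assert rather than prove, the equivalence of the single-cutoff and bounded-sequence characterizations of the $C^\infty$ wave-front set, is standard and follows from the same convolution-splitting argument as in Lemma \ref{Singsuplema}.
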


\subsection{Pseudolocal  property of $\WF_{\t,\s}$} \label{pseloc}

We refer to \cite{PTT-02} for a more general result, and prove here only
the pseudolocal property of the wave-front set $\WF_{\{\t,\s\}}(u),$ $u\in \D'(U)$.

\begin{te} \label{pseudolocal}
Let
$$ P(x,D)=\sum_{|\alpha|\leq m} a_{\alpha } (x) D^{\alpha}$$ be a differential operator of order $m$ on $U$
with $a_\alpha \in \E_{\{\t,\s\}}(U)$, $|\alpha|\leq m $, and
let $u\in \D'(U)$, $\t>0, \s>1$. Then
$$
\WF_{\{\t,\s\}}(P(x,D) u)\subseteq\WF_{\{\t,\s\}}(u),
$$
\end{te}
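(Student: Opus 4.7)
The plan is to verify the defining condition of $\WF_{\{\t,\s\}}$ for $Pu$ at any point $(x_0,\xi_0)\notin\WF_{\{\t,\s\}}(u)$, using Lemma \ref{Singsuplema} as the key input. First, choose a compact neighborhood $K\subset\subset U$ of $x_0$ and a closed conic neighborhood $F$ of $\xi_0$ so that $(K\times F)\cap\WF_{\{\t,\s\}}(u)=\emptyset$. With $\tilde\t=\t^{\s/(\s-1)}$, Lemma \ref{Singsuplema} supplies a $\tilde\t,\s$-admissible sequence $\{\chi_N\}$ with respect to $K$, equal to $1$ on some open $V\ni x_0$, for which the decay \eqref{SingsuplemaUslov} of $\widehat{\chi_N u}$ holds on $F$. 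Pick a second $\tilde\t,\s$-admissible sequence $\{\psi_N\}$ with respect to a compact $K_0\subset V$ and with $\psi_N\equiv 1$ on a fixed neighborhood $\Omega$ of $x_0$; existence is guaranteed by Lemma \ref{ogranicenostNiza}. Since $\supp\psi_N\subset K_0\subset V$ and $\chi_N\equiv 1$ on $V$, locality of $P$ gives $\psi_N Pu=\psi_N P(\chi_N u)$. The candidate sequence is $v_N:=\psi_N Pu$: it is bounded in $\E'(U)$ by Remark \ref{PaleyRemark}, agrees with $Pu$ on $\Omega$, and it only remains to estimate $\widehat{v_N}$ on a conic neighborhood of $\xi_0$.

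Expanding the product and transferring the derivatives through the Fourier transform,
$$
\widehat{v_N}(\xi)=\sum_{|\alpha|\leq m}\int \widehat{\psi_N a_\alpha}(\xi-\eta)\,(2\pi i\eta)^{\alpha}\,\widehat{\chi_N u}(\eta)\,d\eta.
$$
Fix an open cone $\Gamma_0\ni\xi_0$ whose closure (minus the origin) lies in the interior of $F$ and a small $\varepsilon\in(0,1)$ with $\eta\in F$ whenever $\xi\in\Gamma_0$ and $|\xi-\eta|\leq\varepsilon|\xi|$, and split each integral at $|\xi-\eta|=\varepsilon|\xi|$. On the close part $|\eta|\geq(1-\varepsilon)|\xi|$ and $\eta\in F$, so \eqref{SingsuplemaUslov} gives the desired decay of $\widehat{\chi_N u}(\eta)$; the polynomial factor $|\eta|^{|\alpha|}\leq C|\xi|^{m}$ is harmless after a finite shift of the enumeration, and the integral of $|\widehat{\psi_N a_\alpha}(\xi-\eta)|$ is bounded uniformly in $N$ via the estimate $|\widehat{\psi_N a_\alpha}(\zeta)|\leq C\langle\zeta\rangle^{-(d+1)}$, which follows from the $\alpha=0$ case of \eqref{ocenaNiz} together with the $\E_{\{\t,\s\}}$-estimates for $a_\alpha$ through Theorem \ref{tealgebra}.

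On the far part $|\xi-\eta|>\varepsilon|\xi|$, the inequality $|\eta|\leq(1+1/\varepsilon)|\xi-\eta|$ allows one to absorb both the polynomial growth bound \eqref{ogranicenostUN} for $\widehat{\chi_N u}(\eta)$ and the factor $|\eta|^{|\alpha|}\leq|\eta|^{m}$ into an $\langle\xi-\eta\rangle$-power. Integration by parts $|\gamma|$ times yields $|\widehat{\psi_N a_\alpha}(\zeta)|\leq \|D^{\gamma}(\psi_N a_\alpha)\|_{L^1}/|\zeta|^{|\gamma|}$; choosing $|\gamma|=\lfloor (N/\tilde\t)^{1/\s}\rfloor+M+m+d+1$ (with $M$ as in \eqref{ogranicenostUN}) and expanding by Leibniz, each term is controlled by the admissibility bound \eqref{ocenaNiz} for $D^{\delta}\psi_N$ and by $|D^{\gamma-\delta}a_\alpha|\leq C h^{|\gamma-\delta|^{\s}}|\gamma-\delta|^{\t|\gamma-\delta|^{\s}}$. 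Collecting factors via the $\widetilde{(M.2)}$ property of Lemma \ref{osobineM_p_s} and the elementary $|\delta|^{\s}+|\gamma-\delta|^{\s}\leq|\gamma|^{\s}$ leads to the same $B^{N}N^{\t^{-1/\s}N/\s}$-type bound that appears in the proof of Proposition \ref{potrebanUslov}, so the far part contributes a bound of the form $A h^{N} N!^{\tilde\t^{-1/\s}/\s}/|\xi|^{\lfloor(N/\tilde\t)^{1/\s}\rfloor}$ on $\Gamma_0$. Combining the two estimates and enumerating $N\to\tilde\t N$ (as at the end of the proof of Proposition \ref{potrebanUslov}) converts the resulting estimate for $\widehat{v_N}$ into the form \eqref{uslov3'}, which is exactly the defining condition for $(x_0,\xi_0)\notin\WF_{\{\t,\s\}}(Pu)$.

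The main obstacle is precisely the far-region estimate: keeping the Leibniz expansion of $D^{\gamma}(\psi_N a_\alpha)$ within the admissible scale requires the simultaneous use of the admissibility of $\psi_N$ and the $\widetilde{(M.2)}$-type inequality for $M_p^{\t,\s}$ from Lemma \ref{osobineM_p_s}, and it is here that the hypothesis $a_\alpha\in\E_{\{\t,\s\}}(U)$ is used in an essential way. The close region, by contrast, is a straightforward adaptation of the corresponding step in the proof of Lemma \ref{Singsuplema}.
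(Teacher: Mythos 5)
Your argument is correct, but it is organized quite differently from the paper's. The paper obtains Theorem \ref{pseudolocal} as an immediate corollary of Lemma \ref{zatvorenostWfizvodi}: it first establishes the two elementary closedness properties $\WF_{\{\t,\s\}}(\partial_j u)\subseteq\WF_{\{\t,\s\}}(u)$ and $\WF_{\{\t,\s\}}(\phi u)\subseteq\WF_{\{\t,\s\}}(u)$ for $\phi\in\E_{\{\t,\s\}}(U)$ (the latter by forming $\widetilde\chi_N=\phi\chi_N$, checking via the Leibniz estimate \eqref{MozenjeSaFi} that this product again obeys \eqref{ocenaNiz}-type bounds, and then rerunning the convolution splitting of Lemma \ref{Singsuplema}), and finally writes $P(x,D)u=\sum_{|\alpha|\le m}a_\alpha D^\alpha u$ and uses that the wave-front set of a finite sum is contained in the union. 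You instead treat the full operator in a single pass: two nested admissible sequences, the identity $\psi_N Pu=\psi_N P(\chi_N u)$, the representation of $\widehat{\psi_N Pu}$ as a sum of convolutions $\widehat{\psi_N a_\alpha}*\bigl(\eta^{\alpha}\widehat{\chi_N u}\bigr)$, and one near/far splitting. The technical core --- Lemma \ref{Singsuplema}, the admissibility bounds \eqref{ocenaNiz}, the $\widetilde{(M.2)}$-type inequalities of Lemma \ref{osobineM_p_s}, and the final enumeration $N\to\tilde\t N$ --- is the same in both routes; what yours buys is that the bookkeeping is done once rather than iterated through each derivative and each coefficient, at the price of a heavier single computation in which the Leibniz expansion of $D^{\gamma}(\psi_N a_\alpha)$ merges the roles the paper separates. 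One small imprecision: absorbing $|\eta|^{|\alpha|}\le C|\xi|^{m}$ in the near region is not a \emph{finite} shift of the index, since for $\s>1$ one must pass from $N$ to roughly $N+CN^{1-1/\s}$ to gain $m$ extra powers of $|\xi|$ in the denominator; this is, however, precisely the kind of re-indexing the paper's enumeration procedure permits, and it only costs a factor $C^{N}$ that is absorbed into $h^{N}$, so the proof is not affected.
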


The statement directly follows from the next lemma.

\begin{lema}
\label{zatvorenostWfizvodi}
Let $u\in \D'(U)$, $\t>0, \s>1$. Then
$$
\WF_{\{\t,\s\}}(\partial_j u)\subseteq\WF_{\{\t,\s\}}(u), \;\;\; 1\leq j\leq d.
$$
If, in addition
$\dss \phi \in \E_{\{\t,\s\}}(U)$, then
\be \label{closedmultipl}
\WF_{\{\tau,\s\}}(\phi u)\subseteq \WF_{\{\tau,\s\}}(u).
\ee
\end{lema}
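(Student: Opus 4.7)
The plan is to produce, for each inclusion, an explicit witness sequence for the complement of the wave-front set on the left-hand side, starting from the witness sequence for $(x_0,\xi_0)\notin\WF_{\{\t,\s\}}(u)$. Fix such a point and let $\Omega$, $\Gamma$, $\{u_N\}_{N\in\N}$ be as in Definition~\ref{Wf_t_s}: the sequence $\{u_N\}$ is bounded in $\E'(U)$, $u_N=u$ on $\Omega$, and \eqref{uslov3'} holds on $\Gamma$ with some $A,h>0$.

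For the first inclusion, the natural candidate $v_N=\partial_j u_N$ is bounded in $\E'(U)$ and equals $\partial_j u$ on $\Omega$, but $\widehat{v_N}(\xi)=2\pi i\,\xi_j\widehat{u_N}(\xi)$ loses one power of $|\xi|$ in the decay. To recover it I would re-enumerate by taking $M(N)$ as the smallest integer with $\lfloor M(N)^{1/\s}\rfloor\geq\lfloor N^{1/\s}\rfloor+1$, which satisfies $M(N)=N+O(N^{1-1/\s})$ as $N\to\infty$. Then $w_N:=v_{M(N)}$ is bounded in $\E'(U)$, equals $\partial_j u$ on $\Omega$, and for $\xi\in\Gamma$ with $|\xi|\geq 1$
\[
|\widehat{w_N}(\xi)|\leq 2\pi A\,\frac{h^{M(N)}\,M(N)!^{\t/\s}}{|\xi|^{\lfloor M(N)^{1/\s}\rfloor-1}}\leq 2\pi A\,\frac{h^{M(N)}\,M(N)!^{\t/\s}}{|\xi|^{\lfloor N^{1/\s}\rfloor}}.
\]
The sub-linearity of $M(N)-N$ combined with Stirling's formula absorbs the prefactor into $A'h'^N N!^{\t/\s}$ for suitable constants, while the small-$|\xi|$ regime in $\Gamma$ is handled by the uniform polynomial bound \eqref{ogranicenostUN}.

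For the second inclusion, since $\WF_{\{\t,\s\}}$ is local in $x$ and $\D_{\{\t,\s\}}(U)$ is non-quasianalytic (cf.\ Subsection~\ref{SekcijaOsobineNiza}), I may first multiply $\phi$ by a cut-off from $\D_{\{\t,\s\}}(U)$ equal to $1$ near $x_0$, reducing without loss of generality to $\phi\in\D_{\{\t,\s\}}(U)$. I would then take $v_N=\phi u_N$ as witness: it is bounded in $\E'(U)$ and equals $\phi u$ on $\Omega$. On a smaller conic neighborhood $\Gamma_0$ of $\xi_0$ with closure in $\Gamma$, choose $\varepsilon>0$ so that $\xi\in\Gamma_0$ and $|\eta|<\varepsilon|\xi|$ imply $\xi-\eta\in\Gamma$, and split $\widehat{v_N}(\xi)=(\widehat\phi*\widehat{u_N})(\xi)=I_1+I_2$ exactly as in the proof of Lemma~\ref{Singsuplema}, with $I_1$ the integral over $|\eta|<\varepsilon|\xi|$ and $I_2$ the integral over $|\eta|\geq\varepsilon|\xi|$. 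The bound $|I_1|\leq A'h^N N!^{\t/\s}/|\xi|^{\lfloor N^{1/\s}\rfloor}$ follows from \eqref{uslov3'} applied to $\widehat{u_N}(\xi-\eta)$ together with $\widehat\phi\in L^1(\Rd)$. For $I_2$, where $\xi-\eta$ can leave $\Gamma$, I would use the uniform polynomial estimate \eqref{ogranicenostUN} in place of \eqref{uslov3'} and compensate with the ultradifferentiable decay
\[
|\widehat\phi(\eta)|\leq C\,\frac{H^{k^\s}\,k^{\t k^\s}}{|\eta|^{k}},\qquad k\in\Z,\ |\eta|\geq 1,
\]
obtained by iterated integration by parts from the $\E_{\{\t,\s\}}$-estimates on $\phi$. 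The choice $k=\lfloor N^{1/\s}\rfloor+M+d+1$, with $M$ the polynomial growth exponent of $\{\widehat{u_N}\}$, yields the required $|\xi|^{-\lfloor N^{1/\s}\rfloor}$ after integrating over $|\eta|\geq\varepsilon|\xi|$, while the expansions $k^\s=N+O(N^{1-1/\s})$ and $k^{\t k^\s}\leq C^N N!^{\t/\s}$ (both consequences of Stirling and the fact that $N^{1-1/\s}\log N=o(N)$) control the prefactor.

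The main obstacle is the fine calibration of $k$ in the $I_2$ estimate: increasing $k$ strengthens the decay in $|\xi|$ but worsens the prefactor $H^{k^\s}k^{\t k^\s}$, and precisely because $\s>1$ the critical choice $k\approx N^{1/\s}$ leaves only the sub-linear margin $O(N^{1-1/\s}\log N)=o(N)$ that matches $N!^{\t/\s}$ up to factors of order $C^N$; any coarser choice (for example $k\approx 2N^{1/\s}$) would blow up the prefactor by a factor growing faster than any $C^N$, breaking the estimate. This tight balance, enabled by $\s>1$, is exactly what allows the argument to close.
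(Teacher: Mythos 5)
Your argument is correct, but for the multiplication statement \eqref{closedmultipl} it follows a genuinely different route from the paper. The paper does not work with a fixed multiplier: it forms the $N$-dependent products $\widetilde{\chi}_N=\phi\chi_N$ with $\{\chi_N\}$ a $\tilde\t,\s$-admissible sequence ($\tilde\t=\t^{\s/(\s-1)}$), verifies via the Leibniz rule and $\widetilde{(M.2)'}$ that $\{\widetilde{\chi}_N\}$ satisfies admissible-type estimates (hence the Fourier bounds \eqref{MnozenjeSafi2}), and then reruns the convolution argument of Lemma \ref{Singsuplema} verbatim, closing with the enumeration $N\to\tilde\t N$. You instead keep the original witness sequence $u_N$, reduce to a compactly supported multiplier in $\D_{\{\t,\s\}}(U)$ by non-quasianalyticity (legitimate: $(M.3)'$ is proved in Subsection \ref{SekcijaOsobineNiza}, locality of the wave-front set is immediate from Definition \ref{Wf_t_s}, and the algebra property is Theorem \ref{tealgebra}), and exploit the full ultradifferentiable decay of the single $\widehat\phi$ at the calibrated order $k=\lfloor N^{1/\s}\rfloor+M+d+1$; your bookkeeping $k^\s=N+O(N^{1-1/\s})$ and $k^{\t k^\s}\le C^N N!^{\t/\s}$ checks out, as does the absorption of $(1-\varepsilon)^{-\lfloor N^{1/\s}\rfloor}$ and $\varepsilon^{-\lfloor N^{1/\s}\rfloor}$ into $C^N$. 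What each approach buys: the paper's version reuses the admissible-sequence machinery uniformly (and extends to variable-coefficient operators in \cite{PTT-02}) and never needs $\phi$ to be compactly supported, while yours avoids both the $N$-dependent cutoffs and the $\tilde\t$ re-parametrization at the price of the cut-off reduction and a sharper single-order estimate on $\widehat\phi$. For the first inclusion the paper merely cites \cite[Lemma 4.1]{PTT-02}; your re-enumeration $M(N)=N+O(N^{1-1/\s})$ with $\lfloor M(N)^{1/\s}\rfloor\ge\lfloor N^{1/\s}\rfloor+1$ is a correct self-contained substitute, the sublinear shift being absorbed by Stirling exactly as you say (take $h\ge 1$ without loss of generality so that the regime $|\xi|\le 1$ is covered by \eqref{ogranicenostUN}).
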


\begin{proof}
We refer to \cite[Lemma 4.1]{PTT-02} for the first part and prove here only \eqref{closedmultipl}.

Set $\tilde \t=\t^{\frac{\s}{\s-1}}$ and fix $(x_0,\xi_0)\not\in WF_{\{\t,\s\}}(u)$. Then by the definition, there exists open conic neighborhood $\Omega\times \Gamma$ of $(x_0,\xi_0)$ and a bounded sequence $\{u_N\}_{N \in \N}$ in $\dss \E'(U)$ such that
$u_N=u$ on $\Omega$ and

\begin{equation}
\label{MnozenjesaFi1}
|\widehat u_N(\xi)|\leq A\, \frac{h^{N} N!^{\t/\s} }{|\xi|^{\lfloor N^{1/\s} \rfloor}}, \quad N\in { \N},\,\xi\in \Gamma.
\end{equation}
Choose a compact neighborhood $K_{x_0}\subset\subset \Omega$ of $x_0$, and let
$\{\chi_N\}_{N \in N}$ be $\tilde \t, \s$-admissible sequence with respect $K_{x_0}$.
Set $\widetilde{\chi}_N=\phi \chi_N$, $N\in \N$, and note that $\widetilde{\chi}_N u=\widetilde{\chi}_N u_N$. Since $M_p^{\t,\s}=p^{\t p^{\s}}$ satisfies $\widetilde{(M.2)'}$ (see Lemma \ref{osobineM_p_s}) for some positive increasing sequence $C_q$, $q\in \N$, and $h>1$ we obtain

\begin{multline}
\label{MozenjeSaFi}
|D^{\alpha+\beta}\widetilde{\chi}_N(x)| \leq
\sum_{\delta\leq \alpha}\sum_{\gamma\leq \beta} {\alpha\choose \delta}{\beta\choose \gamma} |D^{\alpha-\delta+\beta-\gamma}\widetilde{\chi}_N(x)| |D^{\gamma+\delta}\phi(x)|
\\[1ex]
\leq \sum_{\delta\leq \alpha}\sum_{\gamma\leq \beta} {\alpha\choose \delta}{\beta\choose \gamma}A_{\beta}^{|\alpha-\delta|+1} \lf N^{1/\s}\rf^{|\alpha-\delta|}h^{|\gamma+\delta|^{\s}+1}|\gamma+\delta|^{\t|\gamma+\delta|^{\s}}
\\[1ex]
\leq (2h')^{|\beta|^{\s}+1} \sum_{\delta\leq \alpha} {\alpha\choose \delta}A_{\beta}^{|\alpha-\delta|+1} \lf N^{1/\s}\rf^{|\alpha-\delta|}(C_{\beta}h')^{|\delta|^{\s}}|\delta|^{\t |\delta|^{\s}},
\\[1ex]
\end{multline} for $x\in K_{x_0}$, $|\alpha|\leq \lf(N/\tilde\t)^{1/\s}\rf$, $\beta\in {\bf N}^d$, where $h'=h^{2^{\s-1}}$. Now it is clear that by putting $|\alpha|=0$ in \eqref{MozenjeSaFi} we obtain,
$$\dss |D^{\beta}\widetilde{\chi}_N(x)|\leq C'_{\beta},\quad x\in K_{x_0},$$ and hence by applying Fourier transform it follows
$$
%\label{MnozenjeSafi1}
\dss|\widehat{\widetilde{\chi}}_N(\xi)|\leq C'_{\beta}\langle\xi\rangle^{-|\beta|},\quad \beta\in {\mathbf N}^d, \xi\in \Gamma,
$$ for suitable $C'_{\beta}>0$.
In particular, since $\dss \E_{\{\t,\s\}}(U)\hookrightarrow C^{\infty}(U)$ it follows that $\widetilde{\chi}_N=\phi {\chi}_N$, $N\in \N$, is bounded in $C^{\infty}(U)$ and hence $\widetilde{\chi}_N u$, $N\in \N$, is bounded in $\E'(U)$.

Moreover, note that by the same type of estimates as in \eqref{u_Nkonstrukcija1} for $|\alpha|=\lf(N/\tilde\t)^{1/\s}\rf$ and $\beta\in {\mathbf N}^d$, by \eqref{MozenjeSaFi} we obtain that

$$
|D^{\alpha+\beta}\widetilde{\chi}_N(x)|\leq  C_{\beta}^{''N+1} N^{{\frac{1}{\s}(\frac{1}{\tilde\t})^{1/\s}}N}, \beta\in {\mathbf N}^d, x\in K_{x_0}
$$ and hence after applying Fourier transform it follows
\be
\label{MnozenjeSafi2}
|\widehat{\widetilde{\chi}}_N(\xi)|\leq  C_{\beta}^{''N+1} N^{{\frac{1}{\s}(\frac{1}{\tilde\t})^{1/\s}}N}\langle\xi \rangle^{-|\alpha|-|\beta|},\quad \beta\in {\mathbf N}^d, \xi\in \Gamma,
\ee for some constants $C''_{\beta}>0$.

Now using (\ref{MnozenjesaFi1}) and (\ref{MnozenjeSafi2}) and arguing in the same way as in the proof of Lemma \ref{Singsuplema}, one can find open cone $\Gamma_0 \subseteq \Gamma$ such that
$$
|\widehat{\widetilde\chi_N u}(\xi)|\leq A\, \frac{h^{N} N^{\frac{{\tilde\t}^{-1/\s}}{\s} N} }{|\xi|^{\lfloor (N/\tilde\t)^{1/\s} \rfloor}}, \quad N\in { \N},\,\xi\in \Gamma_0,
$$ for suitable $A,h>0$. After enumeration $N\to \tilde \t N$ the statement follows.
\end{proof}

\subsection{Intersections and unions of  $\WF_{\t,\s}$ and the corresponding singular supports}

It turns out that the regularity related to the complement of the unions and intersections
of wave-front sets $\WF_{\t,\s}$, $\t>0$, $\s>1$,
coincides with the regularity given by (\ref{BorderLinePresek})-(\ref{BorderLineUnija}).

In particular, for $u\in \D'(U)$, we consider
\be
\label{WF01}
\WF_{0,1}(u)=\bigcap_{\s>1}\bigcap_{\t>0}\WF_{\t,\s}(u),
\ee
\be
\label{WFinfty1}
\WF_{\infty,1}(u)=\bigcap_{\s>1}\bigcup_{\t>0}\WF_{\t,\s}(u),
\ee
\be
\label{WF0infty}
\WF_{0,\infty}(u)=\bigcup_{\s>1}\bigcap_{\t>0}\WF_{\t,\s}(u),
\ee
\be
\label{WFinftyinfty}
\WF_{\infty,\infty}(u)=\bigcup_{\s>1}\bigcup_{\t>0}\WF_{\t,\s}(u).
\ee
where $\WF_{\t,\s}(u)$ denotes either $\WF_{\{\t,\s\}}(u)$ or $\WF_{(\t,\s)}(u)$.

From Corollary \ref{PosledicaPresekUnija} we have
$$
\dss \bigcap_{\t>0}\WF_{\{\t,\s\}}(u)=\bigcap_{\t>0}\WF_{(\t,\s)}(u)\;\;\;
\text{ and } \;\;\;
\bigcup_{\t>0}\WF_{\{\t,\s\}}(u)=\bigcup_{\t>0}\WF_{(\t,\s)}(u),
$$
so it is sufficient to observe $\WF_{\{\t,\s\}}(u)$ in \eqref{WF01} -- \eqref{WFinftyinfty}.

By \cite[Lemma 3.4]{PTT-02}, i.e.
$$\bigcup_{\t>0}\WF_{\t,\s_2}(u)\subseteq \bigcap_{\t>0}\WF_{\t,\s_1}(u),\;\;\; u\in \D'(U), \;\s_2>\s_1\geq 1,
$$
we have the following:
\begin{multline}
\WF(u) \subseteq\WF_{0,1}(u)\subseteq \WF_{\infty,1}(u)\\
\subseteq \WF_{0,\infty}(u)\subseteq \WF_{\infty,\infty}(u)\subseteq \bigcap_{\t>1}\WF_{\t}(u)\,,\nonumber
\end{multline} where $\WF(u)$ and $\WF_{\t}(u)$ are the classical and the Gevrey wavefront sets, respectively, see also
\cite[Corollary 3.1]{PTT-02}.

\par

Next we define singular support of distributions with respect to classes $\E_{\{\t,\s\}}$,$\t>0$ and $\s>1$,
and the corresponding borderline cases $\t \in \{ 0, \infty \} $ and $ \s \in \{ 1, \infty \} $ defined by
\eqref{BorderLinePresek} -- \eqref{BorderLineUnija}.

\begin{de}
\label{DefinicijaSingSup}
Let $\t \in[0,\infty]$ and $\s\in[1, \infty]$, $u\in \D'(U)$ and $x_0\in U$.
Then $x_0\not \in \sing_{\{\tau,\s\}}(u)$ if and only if there exists neighborhood $\Omega$ of $x_0$ such that $u\in \E_{\{\t,\s\}}(\Omega)$.
\end{de}

Let $\pi_1:U\times\Rd\backslash \{0\}\to U$ denotes the standard projection given by $\pi_1(x,\xi)=x$.
From Propositions \ref{dovoljanUslov}, \ref{potrebanUslov}, and Lemma \ref{Singsuplema} it follows that for a given $u\in \D'(U)$,
$\t>0$ and $\s>1$,  we have
$ \sing_{\{\t,\s\}}(u)=\pi_1(\WF_{\{\t,\s\}}(u))$.

For the borderline cases $\t \in \{ 0, \infty \} $ and $ \s \in \{ 1, \infty \} $  we have the following.

\begin{te} \label{projections}
Let there be given $u\in \D'(U)$ and let
$\pi_1:U\times\Rd\backslash \{0\}\to U$ be the standard projection. Then
$$
\pi_1(\WF_{\infty,\infty}(u))=\sing_{0,1}(u)\,,
$$
$$
\pi_1(\WF_{0,1}(u))=\sing_{\infty,\infty}(u)\,,
$$
$$
\pi_1(\WF_{\infty,1}(u))=\sing_{0,\infty}(u)\,,
$$
$$
\pi_1(\WF_{0,\infty}(u))=\sing_{\infty,1}(u).
$$
\end{te}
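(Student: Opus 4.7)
My plan is first to verify the single-parameter base identity $\sing_{\{\t,\s\}}(u)=\pi_1(\WF_{\{\t,\s\}}(u))$ for each $\t>0$, $\s>1$, and then to derive each of the four identities by comparing the $\bigcup/\bigcap$ decompositions of both sides over $(\t,\s)$. The base identity itself follows from Propositions \ref{dovoljanUslov}, \ref{potrebanUslov} and Lemma \ref{Singsuplema} applied with $F=\overline{\Rd\setminus\{0\}}$: if the fiber $\{x_0\}\times(\Rd\setminus\{0\})$ misses $\WF_{\{\t,\s\}}(u)$, then closedness of $\WF_{\{\t,\s\}}(u)$ and compactness of $S^{d-1}$ produce a compact neighborhood $K$ of $x_0$ with the same property, Lemma \ref{Singsuplema} then supplies a single bounded sequence $\{\chi_Nu\}\subset\E'(U)$ satisfying \eqref{SingsuplemaUslov} in every direction, and Proposition \ref{dovoljanUslov} upgrades this to $u\in\E_{\{\t,\s\}}(\Omega)$ on the set $\Omega$ where $\chi_N=1$. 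The converse implication is Proposition \ref{potrebanUslov}.

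For each of the four identities, one inclusion is elementary: it follows from $\pi_1(\bigcup_iA_i)=\bigcup_i\pi_1(A_i)$, $\pi_1(\bigcap_iA_i)\subseteq\bigcap_i\pi_1(A_i)$, together with the direct quantifier-matching identities such as $\sing_{\infty,\infty}(u)=\bigcap_{(\t,\s)}\sing_{\{\t,\s\}}(u)$ (both sides merely state ``there exist $\Omega$ and some $(\t,\s)$ with $u\in\E_{\{\t,\s\}}(\Omega)$''). The nontrivial inclusions then split according to the outermost operation defining the wave-front set on the left-hand side. For statements 2 and 3, whose wave-front sets have outer intersection, compactness of the sphere is the key tool: given $x_0$ in the corresponding singular support, the sets $K_{\t,\s}:=\{\xi\in S^{d-1}:(x_0,\xi)\in\WF_{\{\t,\s\}}(u)\}$ form a decreasing family of nonempty closed subsets of the compact sphere (by monotonicity of $\WF_{\{\t,\s\}}(u)$ in $(\t,\s)$), so the finite intersection property yields a common $\xi_0$ in the required intersection of wave-front sets at $x_0$.

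For statements 1 and 4, whose wave-front sets have outer union, I use a fixed-cutoff construction: by Lemma \ref{ogranicenostNiza} I fix $\Omega=B_r(x_0)$ and $K=\overline{B_{2r}(x_0)}$ once and for all, and for each admissible parameter apply Lemma \ref{Singsuplema} with this common $K$. Each application delivers a $\tilde\t,\s$-admissible sequence $\{\chi^{(\t,\s)}_Nu\}$ with decay in every direction, and Proposition \ref{dovoljanUslov} then gives $u\in\E_{\{\t,\s\}}(\Omega)$ on the single common neighborhood for each admissible $(\t,\s)$; intersecting over the admissible parameters yields $u\in\E_{0,1}(\Omega)$ for statement 1 and $u\in\E_{\infty,1}(\Omega)$ for statement 4. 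In the mixed statements 3 and 4, a preliminary finite conic cover of $S^{d-1}$ is used to replace a direction-dependent parameter by a worst-case value over the cover (the maximum of finitely many $\s_j$ for statement 3, of $\t_j$ for statement 4, in the monotonicity direction that preserves the ``$\notin\WF$'' property), reducing the mixed $\bigcap_\s\bigcup_\t$ or $\bigcup_\s\bigcap_\t$ pattern to the monotone case already handled.

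The hard part will be the uniform choice of $r$ in the fixed-cutoff step: ensuring the single compact $K$ is disjoint from $\bigcup_{(\t,\s)}\sing_{\{\t,\s\}}(u)$ (rather than merely from each $\sing_{\{\t,\s\}}(u)$ separately) demands passing to a countable cofinal sub-family $(\t_n,\s_n)\to(0^+,1^+)$ along which the family of singular supports is monotone increasing, exploiting closedness of each $\WF_{\{\t_n,\s_n\}}(u)$, and invoking the coincidence of Beurling and Roumieu types in the limits supplied by Proposition \ref{detectposition} to guarantee the uniform exclusion zone around $x_0$ on which the common cutoff sequences can be set up simultaneously for all admissible parameters.
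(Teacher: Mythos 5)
Your overall route is the paper's: everything is reduced to the pointwise identity $\sing_{\{\t,\s\}}(u)=\pi_1(\WF_{\{\t,\s\}}(u))$ coming from Propositions \ref{dovoljanUslov}, \ref{potrebanUslov} and Lemma \ref{Singsuplema}, followed by bookkeeping of the unions and intersections over $(\t,\s)$; the paper writes out only the fourth identity and leaves the other three to the reader. Your finite-intersection-property argument on the sphere for the outer-intersection cases is sound and is a genuine addition: since the family $\WF_{\{\t,\s\}}(u)$ is directed downward (Corollary \ref{PosledicaPresekUnija} and \cite[Lemma 3.4]{PTT-02}), the nonempty compact fibres $K_{\t,\s}\subseteq S^{d-1}$ over $x_0$ do have a common point, which settles $\sing_{\infty,\infty}(u)\subseteq\pi_1(\WF_{0,1}(u))$ cleanly.

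The genuine gap sits exactly at the step you yourself call ``the hard part,'' and the sketch you give does not close it. For the union-type identities you must pass from ``for every admissible $(\t,\s)$ there is a neighborhood $\Omega_{\t,\s}$ of $x_0$ with $u\in\E_{\{\t,\s\}}(\Omega_{\t,\s})$'' to ``there is a single $\Omega$ (equivalently a single compact $K$ on which Lemma \ref{Singsuplema} can be run with one cut-off family) working for all admissible $(\t,\s)$ simultaneously.'' This amounts to finding a neighborhood of $x_0$ disjoint from $\bigcup_n\sing_{\{\t_n,\s_n\}}(u)$ along your cofinal sequence. That set is an increasing countable union of closed sets, and such a union need not be closed: its complement can contain $x_0$ without containing any neighborhood of $x_0$ (compare $\bigcup_n\{x:|x-x_0|\geq 1/n\}$). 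Closedness of each individual $\WF_{\{\t_n,\s_n\}}(u)$ only yields a neighborhood for each $n$ separately, which is the hypothesis you started from, and Proposition \ref{detectposition} (coincidence of the Roumieu and Beurling limits) says nothing about this uniformity. So the ``uniform exclusion zone'' is asserted rather than proved. You are in good company: the paper's own proof of $\pi_1(\WF_{0,\infty}(u))=\sing_{\infty,1}(u)$ makes the same silent assumption when it posits, in \eqref{KomplementWF}, a compact neighborhood $K$ of $x_0$ with $K\times\Rd\backslash\{0\}$ contained in the complement of $\WF_{0,\infty}(u)$ --- this presupposes that $\pi_1(\WF_{0,\infty}(u))$ is closed, which is essentially part of what is being proved (the right-hand sides $\sing_{\cdot,\cdot}(u)$ are closed by definition). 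Until this uniformity is established by an actual argument, only the elementary inclusions and the pure-intersection identity are fully justified in your write-up.
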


\begin{proof}
We prove here only $ \pi_1(\WF_{0,\infty}(u))=\sing_{\infty,1}(u) $
and leave the other equalities to the reader.

Assume that $x_0 \not \in \pi_1(\WF_{0^+,\infty}(u))$, so that there is a compact neighborhood $K \subset \subset U$ of $x_0$
such that
\be
\label{KomplementWF}
K\times \Rd\backslash\{0\}\subseteq(\WF_{0^+,\infty}(u))^c=\bigcap_{\s>1}\bigcup_{\t>0}(\WF_{\{\t,\s\}}(u))^c,
\ee
where $(\WF_{\{\t,\s\}}(u))^c$ denotes the complement of the set $\WF_{\{\t,\s\}}(u)$ in $U\times \Rd\backslash\{0\}$.
Therefore, if $(x,\xi)\in K\times \Rd\backslash\{0\}$ then for every $\s>1$ there exist $\t_0>0$ such that $(x,\xi)\not \in \WF_{\{\t_0,\s\}}(u)$.

Let $\s>1$ be arbitrary but fixed, and set $\tilde \t_0=\t_0^{\s/(\s-1)}$.
From Lemma \ref{Singsuplema}, it follows that there is a $\tilde \t_0,\s$-admissible sequence
$\{\chi_N\}_{N\in \N}$ such that $u_N=\chi_N u$, $N\in \N$ is a  bounded sequence in $\E'(U)$, $u_N=u$ on some
$\Omega \subseteq K$, and
$$
|\widehat{\chi_N u}(\xi)|\leq A \frac{h^{N} N!^{{\tilde \t_0}^{-1/\s}/{\s}} }{|\xi|^{\lfloor ({N/{\tilde\t_0}})^{1/\s} \rfloor}},\quad N\in {\N}\,,\xi\in \Rd\backslash\{0\}\,,
$$
which after enumeration $N\to \tilde \t_0 N $ becomes
\be
\label{OcenaPresekUnijaWF1}
|\widehat{\chi_N u}(\xi)|\leq A \frac{h^{N} N!^{\t_0/{\s}} }{|\xi|^{\lfloor {N}^{1/\s} \rfloor}},\quad N\in {\N}\,,\xi\in \Rd\backslash\{0\}\,.
\ee
By Proposition \ref{dovoljanUslov} it follows that $u\in \E_{\{\t_0,\s\}}(U)$, and since $\s$ can be chosen arbitrary,
we conclude that $u\in \E_{\infty,1}(U)$ (see Proposition \ref{detectposition}).
Therefore   $ \sing_{\infty,1}(u) \subset \pi_1(\WF_{0,\infty}(u))$.

For the opposite  inclusion, assume that $x_0\not \in \sing_{\infty, 1}(u)$. Then  $u\in \E_{\infty,1}(\Omega)$, for some
$\Omega$ which is a neighborhood of $x_0$.
In particular, for every $\s>1$ there exists $\t_0>0$ such that $u\in\E_{\t_0,\s}(\Omega)$. Fix $\s>1$ and put $\tilde \t =\t_0^{\s/(\s-1)}$.
Now we use a $\tilde \t_0,\s$-admissible sequence $\{\chi_N\}_{N\in \N}$ and
Proposition \ref{potrebanUslov} implies (\ref{OcenaPresekUnijaWF1}). It follows that $(x_0,\xi)\in  (\WF_{\{\t_0,\s\}}(u))^c$
for every  $\s>1$ and for some $\t_0>0$. Hence, by the equality in (\ref{KomplementWF}) it follows that $(x_0,\xi)\not \in \WF_{0,\infty}(u)$
for every $\xi\in \Rd\backslash\{0\}$ and therefore $x_0\not \in \pi_1(\WF_{0,\infty}(u))$, wherefrom
$
\pi_1(\WF_{0,\infty}(u)) \subset \sing_{\infty,1}(u),
$
which finishes the proof.
\end{proof}

% ------------------------------------------------------------------------

\subsection*{Acknowledgment}
This research is supported by Ministry of Education, Science and
Technological Development of Serbia through the Project no. 174024.
\par
%%%%%%%%%%%%%%%%%%%%%%%%%%%%%%
%%%%%%%%%%%%%%%%%%%%%%%%%%%%%%

% ------------------------------------------------------------------------
\end{document}